\newtheorem{expl}{Example}[section]
\newtheorem{defn}[expl]{Definition}
\newtheorem{prop}[expl]{Proposition}
\newtheorem{thm}[expl]{Theorem}
\newtheorem{lem}[expl]{Lemma}
\theoremstyle{definition}
\newtheorem{rem}[expl]{Remark}
\numberwithin{equation}{section}
\begin{document}

\title{Semi-continuity of Complex Singularity Exponents in Singular Central Fiber Cases}
\author{Yuchen Liu}
\date{\today}

\maketitle

\abstract{}

    In this paper, we prove the semi-continuity of complex singularity exponents for holomorphic families in singular central fiber cases. We also show that the integrals along fibers is stable for holomorphic families in some 2-dimensional cases. Besides, a sequence of counterexamples to the semi-continuity property occur for non-holomorphic families.\\

\textit{Keywords:} complex singularity exponent, semi-continuity property, singular central fiber.

\section{Introduction}\label{sect 1}
 Complex singularity exponent is a quantitative measure of singularities of a holomorphic function. Arnold and Varchenko once studied some special cases of it from different perspectives. In \cite{tia89}, Tian first defined it clearly and applied it to his study of the Calabi problem. Related problems were also considered by Schwartz, H\"{o}rmander, Lojasiewicz, Gel'fand, Atiyah and Bernstein (see \cite{kol08}). We now recall the definition.\\

Let $X$ be a complex manifold and $g$ be a Hermitian metric on $X$.
Let $K$ be a compact subset of $X$ and $f$ be a holomorphic function
defined on $X$.

\begin{defn}
The complex singularity exponent of $f$ on
$K$ is defined to be the nonnegative number
\begin{displaymath}
c_K(f)=\sup\{c\geq 0 : \int_U|f|^{-2c}dV_g<+\infty\textrm{ for some
open neighborhood }U\textrm{ of }K\}.
\end{displaymath}
If $f(p)\neq 0$ for any $p\in K$, then we put $c_K(f)=+\infty$. If $f\equiv 0$ on $X$, then we put $c_K(f)=0$.
\end{defn}

If $K$ only contains a single point $p$, we will use the notation
$c_p(f)$ instead of $c_{\{p\}}(f)$. It is easy to see that $c_K(f)=\min_{p\in K} c_p(f)$.\\

  Lower semi-continuity is a fundamental property of complex singularity exponent. It was Varchenko who first proved the semi-continuity of complex singularity exponents for some particular holomorphic families in \cite{var83}. In \cite{tia89}, Tian has shown that the finiteness of $\int |f(x,y)|^{-2c}dV$ is stable under holomorphic perturbation of $f(x,y)$ with small sup norms, which implies semi-continuity for continuous families in 2 dimensions. For arbitrary dimensions, Phong and Sturm proved the holomorphic stability of $\int |f(z,t)|^{-2c}dV$ for 1-parameter deformations in \cite{ps00}, which implies semi-continuity for 1-parameter holomorphic families. In \cite{dk01}, Demailly and Koll\'{a}r proved semi-continuity of complex singularity exponents for plurisubharmonic functions, which contains holomorphic functions as special cases. Besides, their work generalized Tian's result about the stability of $\int |f|^{-2c}dV$ to arbitrary dimensions by applying Ohsawa-Takegoshi $L^2$ extension theorem.

\begin{thm}[\cite{dk01}, also \cite{tia89}, \cite{ps00}]\label{semicont smooth}
Let $X$ be a complex manifold, and $K$ is a compact subset of $X$.
The map $\mathcal{O}(X)\ni f\mapsto c_K(f)$ is lower semi-continuous
with respect to the topology of uniform convergence on compact sets
(uniform convergence on a fixed neighborhood of $K$ is of course
enough). More explicitly, for every nonzero holomorphic function
$f$, for every compact neighborhood $L$ of $K$ and every
$\epsilon>0$, there is a number
$\delta=\delta(f,\epsilon,K,L)>0$ such that

\begin{displaymath}
\sup_L |g-f|<\delta\quad\Rightarrow\quad c_K(g)\geq
c_K(f)-\epsilon.
\end{displaymath}

Moreover, if $c<c_K(f)$ and $g$ converges to $f$ in
$\mathcal{O}(X)$, then $|g|^{-2c}$ converges to $|f|^{-2c}$ in $L^1$
on some neighborhood $U$ of $K$.

\end{thm}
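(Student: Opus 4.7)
The plan is to follow the Ohsawa--Takegoshi $L^2$ extension approach of Demailly and Koll\'ar. First I localize: since $c_K(f) = \min_{p \in K} c_p(f)$ and $K$ is compact, it suffices to establish a quantitative version of the statement at each point. Thus fix $p \in K$, a number $c < c_p(f)$, and a small Stein coordinate ball $V$ about $p$ with $M := \int_V |f|^{-2c}\,dV < \infty$; the goal is to produce $\delta, C > 0$ and $V' \Subset V$ such that $\int_{V'}|g|^{-2c}\,dV \leq C$ for every $g \in \mathcal{O}(V)$ with $\sup_V |g - f| < \delta$.

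The central construction is the holomorphic family $F(z, w) := f(z) + w\bigl(g(z) - f(z)\bigr)$ on $V \times \mathbb{C}$, which satisfies $F(\cdot, 0) = f$ and $F(\cdot, 1) = g$. Since $F$ is holomorphic, the weight $\varphi(z, w) := 2c\log|F(z, w)|$ is plurisubharmonic. I then apply the effective Ohsawa--Takegoshi theorem on the pseudoconvex domain $V \times \Delta_R$ (for a fixed $R > 1$) to extend the section $1 \in \mathcal{O}(V \times \{0\})$ to a holomorphic $H \in \mathcal{O}(V \times \Delta_R)$ with $H(\cdot, 0) \equiv 1$ and the $L^2$ estimate
\[
\int_{V \times \Delta_R} |H(z, w)|^2 \, |F(z, w)|^{-2c} \, dV(z) \, dA(w) \leq C_R\, M.
\]
Because $|F|$ is uniformly bounded above on $V \times \Delta_R$ by $\sup_V|f| + R\delta$, the weight $|F|^{-2c}$ has a positive lower bound there, turning this into an unweighted $L^2$ bound on $H$. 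Writing $H(z, w) = 1 + wK(z, w)$ with $K$ holomorphic and using Cauchy's integral formula to control $K$ in terms of the $L^2$ bound on $H$, one obtains $|H(z, w)| \geq 1/2$ on a polydisk $V' \times \Delta(r_0)$ about $(p, 0)$, with $r_0 > 0$ depending on $f, c, V, R$ but not on the particular $g$. Combined with the weighted estimate this gives $\int_{V' \times \Delta(r_0)} |F(z, w)|^{-2c}\,dV\,dA \leq 4 C_R M$.

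The main obstacle lies in converting the Fubini consequence --- integrability of $w \mapsto \int_{V'}|F(z,w)|^{-2c}\,dV(z)$ at \emph{almost every} $w \in \Delta(r_0)$ --- into integrability at the specific fiber $w = 1$ corresponding to $g$. I would address this by combining Berndtsson's theorem on plurisubharmonic variation of weighted integrals (which implies that $w \mapsto -\log \int_{V'}|F|^{-2c}\,dV$ is plurisubharmonic where finite) with the upper semi-continuity in $w$ of the complex singularity exponent along a holomorphic family (Phong--Sturm). Together these upgrade almost-everywhere finiteness to finiteness at every $w \in \Delta(r_0)$; a bootstrapping step covering the segment $[0, 1]$ by finitely many overlapping disks, re-running the Ohsawa--Takegoshi argument centered at each known good base point, finally carries the bound to $w = 1$ and yields the uniform estimate $\int_{V'}|g|^{-2c}\,dV \leq C$.

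The $L^1$-convergence statement then follows by applying the above uniform bound with a slightly larger exponent $c' \in (c, c_K(f))$ in place of $c$, which furnishes the uniform integrability of $\{|g_\nu|^{-2c}\}$ on $V'$ as $g_\nu \to f$. Pointwise convergence $|g_\nu|^{-2c} \to |f|^{-2c}$ holds almost everywhere (outside the measure-zero analytic set $\{f = 0\}$), so the Vitali convergence theorem gives the asserted $L^1$-convergence on $V'$.
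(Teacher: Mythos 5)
This theorem is quoted in the paper from \cite{dk01}, \cite{tia89}, and \cite{ps00}; the paper does not contain a proof of it, so there is no ``paper's own proof'' to compare against. Your proposal is in the spirit of the Ohsawa--Takegoshi approach of Demailly--Koll\'ar, and the early steps (localization, the holomorphic interpolation $F(z,w)=f(z)+w(g(z)-f(z))$, the application of Ohsawa--Takegoshi from the slice $V\times\{0\}$, and the Cauchy estimate giving $|H|\geq 1/2$ on $V'\times\Delta(r_0)$) are sound.

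The genuine gap is in the step where you pass from $\int_{V'\times\Delta(r_0)}|F|^{-2c}\,dV\,dA<\infty$ (which by Fubini gives finiteness of the fiberwise integral for \emph{almost every} $w$) to finiteness at the single relevant parameter value. The tools you invoke do not close this gap. Berndtsson's theorem tells you that $w\mapsto -\log\int_{V'}|F(\cdot,w)|^{-2c}dV$ is plurisubharmonic, hence the ``bad'' set $B=\{w:\int_{V'}|F(\cdot,w)|^{-2c}=\infty\}$ is polar; this is strictly weaker than what Fubini already gave (measure zero), and polar sets in $\mathbb{C}$ can be nonempty. Phong--Sturm's stability result gives \emph{lower} semi-continuity of $w\mapsto c_p(F(\cdot,w))$ --- not \emph{upper}, as you write (indeed upper semi-continuity is false: take $F(z,w)=z-w$, $p=0$, where $c_p$ jumps from $1$ to $+\infty$) --- so $B$ is closed, and ``closed, polar, and measure zero'' does not imply empty; for instance $B$ could be a single point. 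The bootstrap step also fails quantitatively: re-running Ohsawa--Takegoshi from a Chebyshev-selected good point $w_*\in\Delta(r_0)$ requires a bound on $\int_{V'}|F(\cdot,w_*)|^{-2c}$ of order $M_2/r_0^2$, and the resulting radius $r_*\sim r_0/\sqrt{M_2}$ is \emph{smaller} than $r_0$ whenever $r_0$ is already small, so the chain of discs does not reach $w=1$. Finally, there is a circularity concern in invoking Phong--Sturm as a lemma, since \cite{ps00} is itself one of the attributed sources for the statement being proved. The Demailly--Koll\'ar proof in fact avoids the auxiliary parameter $w$ altogether: it compares $\int e^{-2c\log|g|}$ directly to $\int e^{-2c\log|f|}$ via an effective Ohsawa--Takegoshi restriction-to-a-point estimate, which is why the almost-everywhere issue never arises there.
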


 The purpose of this paper is to study the semi-continuity property of complex singularity exponents in singular central fiber cases. As far as I know, there is no known result in these cases.
 To begin with, let $X$ be a complex
manifold with $\textrm{dim}_{\mathbb{C}}X\geq 2$, $\pi:
X\rightarrow \Delta$ be a surjective flat holomorphic map with connected fibers, where
$\Delta$ is the open disk centered at $0$ in $\mathbb{C}$ or
$\mathbb{C}$ itself. Define $X_t:=\pi^{-1}(t)$. We also suppose that
for any $t\in\Delta\setminus\{0\}$, $t$ is a regular value of $\pi$,
hence $X_t$ is smooth when $t\neq 0$.
Let $p_0$ be a singular point of $X_0$. Let $p_t\in X_t$ be a
sequence of points, such that $\lim_{t\rightarrow 0}
p_t=p_0$.\\

In this paper, we prove the following theorem:

\begin{thm}[Main Theorem]\label{main thm}
Suppose $F: X\rightarrow
\mathbb{C}$ is a holomorphic function, $X_0^{(i)}$ is any irreducible component of $X_0$ containing $p_0$. Denote $f_t=F|_{X_t}$, $f_0^{(i)}=F|_{X_0^{(i)}}$, then the inequality
\begin{displaymath}
c_{p_0}(f_0^{(i)})\leq \varliminf_{t\rightarrow 0} c_{p_t}(f_t)
\end{displaymath}
always holds.
\end{thm}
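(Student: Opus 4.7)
The plan is to reduce the singular central fiber case to the smooth-fiber case of Theorem~\ref{semicont smooth} by combining an embedded resolution of singularities of $X_0\subset X$ with the birational invariance (under resolution) of the complex singularity exponent. Since the inequality is purely local at $p_0$, I replace $X$ by a small polydisk neighborhood of $p_0$ in $\mathbb{C}^{n+1}$, so that $\pi$ is given by a holomorphic function $h$ with $h(p_0)=0$. Because $c_{p_0}(f_0^{(i)})=\min_k c_{p_0}(f_0^{(i)}|_{W_k})$ over the finitely many analytically irreducible local branches $W_k$ of $X_0^{(i)}$ at $p_0$, it suffices to prove the inequality when $X_0^{(i)}$ is analytically irreducible at $p_0$ (this only strengthens the conclusion), which I henceforth assume.

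I then invoke Hironaka's embedded resolution to obtain a proper birational morphism $\sigma\colon\widetilde X\to X$, a composition of smooth-center blowups, with $\widetilde X$ smooth and $\sigma^{-1}(X_0)$ a simple normal crossing divisor. The strict transform $\widetilde{X_0^{(i)}}$ is then smooth, and $\mu:=\sigma|_{\widetilde{X_0^{(i)}}}\colon\widetilde{X_0^{(i)}}\to X_0^{(i)}$ is a proper birational resolution of singularities. Since $\mu$ is a biholomorphism off a measure-zero subset (the union of the exceptional locus and the preimage of the non-normal locus) and the pulled-back metric is comparable to any fixed Hermitian metric on compact subsets of that open locus, a change-of-variables argument yields
\[
  c_{p_0}(f_0^{(i)})\;=\;\min_{\tilde p\in\mu^{-1}(p_0)}c_{\tilde p}\bigl(\sigma^*F|_{\widetilde{X_0^{(i)}}}\bigr);
\]
in particular, $c_{\tilde p}(\sigma^*F|_{\widetilde{X_0^{(i)}}})\geq c_{p_0}(f_0^{(i)})$ for every $\tilde p\in\mu^{-1}(p_0)$.

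For $t\neq 0$, since $\sigma$ is an isomorphism away from $\sigma^{-1}(X_0)$ and $p_t\notin X_0$, there is a unique lift $\tilde p_t\in\widetilde X_t:=(\pi\circ\sigma)^{-1}(t)$ of $p_t$. By compactness of $\sigma^{-1}(p_0)$, after passing to a subsequence, $\tilde p_t\to\tilde p_0\in\sigma^{-1}(p_0)$. Consider first the favorable case in which $\tilde p_0$ is a smooth point of $\sigma^{-1}(X_0)$ lying only on the component $\widetilde{X_0^{(i)}}$: I can then choose SNC local coordinates $(w_1,\ldots,w_{n+1})$ centered at $\tilde p_0$ with $\widetilde{X_0^{(i)}}=\{w_1=0\}$ and $\pi\circ\sigma=w_1^a\cdot u$ for some $a\in\mathbb{Z}_{>0}$ and unit $u$. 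For $t$ close to $0$, the fiber $\widetilde X_t$ locally splits into $a$ disjoint graphs $\{w_1=\zeta\cdot\varphi(w_2,\ldots,w_{n+1},t)\}$ indexed by $a$-th roots of unity, each projecting biholomorphically along the $w_1$-direction onto a fixed neighborhood of $\tilde p_0$ in $\widetilde{X_0^{(i)}}$. Pulling back $\sigma^*F$ via these projections produces a family of holomorphic functions on the fixed smooth manifold $\widetilde{X_0^{(i)}}$ converging uniformly on compacts to $\sigma^*F|_{\widetilde{X_0^{(i)}}}$ as $t\to 0$; Theorem~\ref{semicont smooth}, combined with the change of variables above, then gives $c_{p_0}(f_0^{(i)})\leq c_{\tilde p_0}(\sigma^*F|_{\widetilde{X_0^{(i)}}})\leq\varliminf c_{p_t}(f_t)$.

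The main obstacle, which I anticipate is the most technical part of the proof, is the case in which $\tilde p_0$ does not lie on the generic locus of $\widetilde{X_0^{(i)}}$, but instead on an exceptional divisor of $\sigma$ or at a crossing with another component of $\sigma^{-1}(X_0)$; a simple example such as $X=\mathbb{C}^2$, $\pi(x,y)=xy$, $p_t=(\sqrt t,\sqrt t)$ shows that this actually occurs. My proposed remedy is either (a) to refine $\sigma$ by additional blowups along the Zariski closure of the arc $\{\tilde p_t\}$, so that after passing to a further subsequence the new lift migrates into the open stratum of $\widetilde{X_0^{(i)}}$, or (b) to apply the Ohsawa--Takegoshi extension theorem to the smooth hypersurface $\widetilde{X_0^{(i)}}\subset\widetilde X$: extend the integrability witness $|\sigma^*F|^{-2c}\in L^1_{\mathrm{loc}}(\widetilde{X_0^{(i)}})$ (available for $c<c_{p_0}(f_0^{(i)})$) to an $L^1$ estimate on a tubular neighborhood in $\widetilde X$, and then use a coarea/Fubini argument over $\pi\circ\sigma$ to transfer the integrability uniformly onto each nearby fiber $\widetilde X_t$ in a neighborhood of $\tilde p_t$.
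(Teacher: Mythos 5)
Your proposal breaks down at the change-of-variables step, and this is not a cosmetic issue but precisely the adjunction subtlety that the paper's proof (Lemma~\ref{compute ky0x0}) is built to handle. You claim that because $\mu=\sigma|_{\widetilde{X_0^{(i)}}}$ is a biholomorphism off a measure-zero set, $c_{p_0}(f_0^{(i)})=\min_{\tilde p\in\mu^{-1}(p_0)}c_{\tilde p}(\sigma^*F|_{\widetilde{X_0^{(i)}}})$, hence $c_{\tilde p}(\sigma^*F)\geq c_{p_0}(f_0^{(i)})$ for every $\tilde p\in\mu^{-1}(p_0)$. This is false, because the pulled-back volume form is $\mu^*dV_{X_0^{(i)}}\approx |{\rm Jac}\,\mu|^2\,dV_{\widetilde{X_0^{(i)}}}$, not $dV_{\widetilde{X_0^{(i)}}}$, and the Jacobian factor vanishes along $\mu^{-1}(p_0)$. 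Concretely, if $X_0^{(i)}$ is a smooth surface, $F=x^2+y^3$ and $\mu$ is the blowup of the origin, then $c_{p_0}(F)=5/6$ while $\sigma^*F=v^2(u^2+v)$ has multiplicity $3$ at the exceptional point $\tilde p_0$, so $c_{\tilde p_0}(\sigma^*F)\leq 2/3<5/6$. Thus the inequality you need runs the wrong way; what Proposition~\ref{compute CSE} gives is $c_{p_0}(f_0^{(i)})\leq\min_i\frac{k_i+1}{a_i}$ with the discrepancies $k_i$ of $K_{\widetilde{X_0^{(i)}}/X_0^{(i)}}$ included, and dropping the $k_i$'s loses the inequality. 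The paper deals with this by computing $K_{Y_0^{(j)}/X_0^{(j)}}=K_{Y/X}|_{Y_0^{(j)}}+(Y_0^{(j)}-g^*X_0^{(j)})|_{Y_0^{(j)}}$ and using that the correction term is $\leq 0$; your argument has no analogue of this and cannot conclude even in the ``favorable'' case.

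The second problem is that the ``unfavorable'' case, where $\tilde p_0$ lies on an exceptional divisor or a crossing, is not a corner case but the generic situation, and your two remedies do not close it. Remedy (a) does not move $\tilde p_0$ into the open stratum of $\widetilde{X_0^{(i)}}$: in the model $X=\mathbb{C}^2$, $\pi=xy$, $p_t=(\sqrt t,\sqrt t)$, blowing up $(0,0)$ sends the lifts to $(u,v)=(\sqrt t,1)\to(0,1)$, a smooth interior point of the \emph{new} exceptional component, not of $\widetilde{X_0^{(i)}}$; further blowups keep reproducing this phenomenon, and the ``arc'' $\{\tilde p_t\}$ is in any case just a sequence of points with no canonical Zariski closure to blow up. Remedy (b) (Ohsawa--Takegoshi plus coarea) is exactly the route of \cite{dk01} for the smooth-fiber Theorem~\ref{semicont smooth}, but making it yield semicontinuity across a singular central fiber is the entire content of the theorem; you have not outlined how to control the tubular-neighborhood estimate near the strata where $\widetilde{X_0^{(i)}}$ meets other components. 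The paper's route is substantially different: Theorem~\ref{Hironaka} produces a single morphism $g\colon Y\to X$ that is simultaneously a log resolution of $(X_t,V_t)$ for all small $t\neq 0$ and of each $(X_0^{(j)},V_0^{(j)})$, so that $c_{p_t}(f_t)$ and a bound on $c_{p_0}(f_0^{(j)})$ are both expressed over the same finite set of divisors $E_i$ with the same $(a_i,k_i)$, and closedness of $g(E_{m_0})$ then forces the inequality; the general (non-SNC) central fiber is reduced to this via an extra resolution of $X_0$ and a Nullstellensatz comparison of volume forms. None of these mechanisms appear in your proposal, and without them the argument does not go through.
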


The proof of main theorem relies on the Hironaka resolution theorem. If $X_0$ is simple normal crossing, Theorem \ref{Hironaka} ensures us to choose a log resolution of $(X,V(F))$ to get a family of log resolutions of $(X_t, V_t)$, then $c_{p_t}(f_t)$ can be easily computed by these log resolutions and the adjunction formula. For general $X_0$, we first use the Hironaka resolution theorem to pull back the integral on the resolution of $X_0$, then we use the same idea as in the simple-normal-crossing case.\\

Moreover, we obtain a stronger form of the main theorem by applying the ACC for the log canonical threshold (cf. \cite{dflm09}, \cite{dflm11} and \cite{hmx12}):
\begin{thm}[Stronger form of the main theorem]\label{stronger main thm}
Suppose $F: X\rightarrow
\mathbb{C}$ is a holomorphic function, $X_0^{(i)}$ is any irreducible component of $X_0$ containing $p_0$. Denote $f_t=F|_{X_t}$, $f_0^{(i)}=F|_{X_0^{(i)}}$, then there exists $\delta>0$ such that
\begin{displaymath}
c_{p_0}(f_0^{(i)})\leq c_{p_t}(f_t)\textrm{ for any }|t|<\delta.
\end{displaymath}
\end{thm}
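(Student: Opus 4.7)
The plan is to combine the Main Theorem \ref{main thm} with the ACC property for log canonical thresholds (cf.\ \cite{dflm09}, \cite{dflm11}, \cite{hmx12}) via a proof by contradiction. The Main Theorem delivers only a liminf inequality, which a priori permits the thresholds $c_{p_t}(f_t)$ to dip strictly below $c_{p_0}(f_0^{(i)})$ along some sequence $t\to 0$; ACC, however, will forbid the strictly monotone accumulation such a dip would have to produce.

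Concretely, I would suppose the conclusion fails and choose a sequence $t_n \to 0$ in $\Delta\setminus\{0\}$ with $c_{p_{t_n}}(f_{t_n}) < c_{p_0}(f_0^{(i)})$ for every $n$. Theorem \ref{main thm} then gives
\[
\varliminf_{n\to\infty} c_{p_{t_n}}(f_{t_n}) \geq c_{p_0}(f_0^{(i)}),
\]
so every accumulation point of the bounded sequence $\{c_{p_{t_n}}(f_{t_n})\}$ must in fact equal $c_{p_0}(f_0^{(i)})$. Passing to a subsequence I may assume $c_{p_{t_n}}(f_{t_n}) \to c_{p_0}(f_0^{(i)})$ strictly from below, and a routine extraction yields a further subsequence along which the values are strictly increasing.

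For the finishing blow, observe that for $t_n \neq 0$ the fiber $X_{t_n}$ is smooth of complex dimension $\dim_{\mathbb{C}} X - 1$, and $c_{p_{t_n}}(f_{t_n})$ is the local log canonical threshold at $p_{t_n}$ of the pair $(X_{t_n}, \mathrm{div}(f_{t_n}))$ on this smooth variety. Hence every $c_{p_{t_n}}(f_{t_n})$ lies in the set of LCTs of pairs on smooth varieties of that fixed dimension, which by the ACC theorem admits no strictly increasing infinite sequence --- directly contradicting the previous paragraph. The degenerate cases are painless: if $c_{p_0}(f_0^{(i)}) = 0$ the inequality is automatic, and if $f_t \equiv 0$ on $X_t$ for some $t$ near $0$ then the Main Theorem forces $c_{p_0}(f_0^{(i)}) = 0$, reducing to the previous case.

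The principal obstacle I foresee is not the structure of the argument but the precise invocation of ACC in the analytic category: the Hacon--McKernan--Xu theorem is stated algebraically, whereas here the pairs $(X_{t_n}, \mathrm{div}(f_{t_n}))$ are a priori only holomorphic germs. Fortunately \cite{dflm09} and \cite{dflm11} cover the germ-theoretic (local analytic) version directly, so this step should reduce to a citation rather than to new work.
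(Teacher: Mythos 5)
Your proof is correct and takes essentially the same approach as the paper's: a proof by contradiction that combines the liminf inequality from Theorem \ref{main thm} with the ACC theorem \ref{acc}, extracting a strictly increasing subsequence of complex singularity exponents on the smooth fibers $X_{t_n}$ to reach a contradiction. Your version is slightly more explicit about why the strictly increasing subsequence exists and about the degenerate cases, but the argument is the same.
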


We also show the stability of integrals along fibers for some 2-dimensional cases.

\begin{thm}\label{eff irreducible}
Suppose $X=D_0(R_0,R_0)$, $\Delta=B_0(R_0^2)$ where $R_0>0$. Suppose $\pi$ is
defined by $\pi(x,y)=xy$. Suppose $F$ is irreducible in $\mathbb{C}\{x,y\}$. Then for any $0<c<c_0(f_0)$ and $f_t=F|_{X_{t}}$, there exists $R>0$ and $U=D_0(R,R)$ such that
\begin{displaymath}
\lim_{t\rightarrow 0} \int_{U_t}\frac{dV_t}{|f_t|^{2c}}=\int_{U_0}\frac{dV_0}{|f_0|^{2c}}.
\end{displaymath}
\end{thm}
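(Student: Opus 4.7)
The plan is to reduce $F$ to a Weierstrass normal form and then split the fiber integral geometrically into pieces matching the two components of $X_0$. Since $0 < c < c_0(f_0)$, the function $F$ cannot be divisible by either $x$ or $y$, and we may assume $F(0,0)=0$ (else $|F|$ is bounded below on a neighborhood of the origin and the conclusion follows by dominated convergence). Swapping $x \leftrightarrow y$ if necessary (which preserves $\pi(x,y)=xy$) and absorbing a nonvanishing unit, Weierstrass preparation lets us assume
\begin{displaymath}
F(x,y) = y^n + a_1(x)y^{n-1} + \cdots + a_n(x),
\end{displaymath}
with $a_i(0)=0$ and $a_n(x) = x^m w(x)$, $w(0) \neq 0$, $m, n \geq 1$, so that $c_0(f_0) = \min(1/m, 1/n)$.

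For small $R$ and $|t| > 0$, parametrize $X_t \cap D_0(R,R)$ by $x$ on the ``$x$-half'' $\sqrt{|t|} \leq |x| \leq R$ (where $y = t/x$) and by $y$ on the ``$y$-half'' $\sqrt{|t|} < |y| \leq R$. A direct computation from the ambient Euclidean metric gives $dV_t = (|x|^2 + |y|^2)/|x|^2\, dA(x)$ on the $x$-half, with this factor lying in $[1,2]$ there. Splitting $\int_{U_t} |f_t|^{-2c} dV_t = I_1(t) + I_2(t)$ accordingly and invoking the $x \leftrightarrow y$ symmetry, it suffices to prove
\begin{displaymath}
\lim_{t \to 0} I_1(t) = \int_{|x| \leq R} |F(x,0)|^{-2c}\, dA(x).
\end{displaymath}
For a small parameter $\delta \in (0,R)$, I further decompose $I_1(t) = I_1^{\mathrm{out}}(t,\delta) + I_1^{\mathrm{in}}(t,\delta)$ along $\{|x| \geq \delta\}$ and $\{\sqrt{|t|} \leq |x| < \delta\}$. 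On the outer part, $|y| \leq |t|/\delta \to 0$ uniformly and $|F(x,0)| \geq \delta^m |w(0)|/2 > 0$ on the compact annulus, so the integrand converges uniformly to $|F(x,0)|^{-2c}$, giving $I_1^{\mathrm{out}}(t,\delta) \to \int_{\delta \leq |x| \leq R} |F(x,0)|^{-2c}\, dA(x)$ as $t \to 0$.

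The main obstacle is the inner contribution $I_1^{\mathrm{in}}(t,\delta)$: one must show it is bounded by a quantity depending only on $\delta$ and tending to $0$ as $\delta \to 0$. The difficulty is that in this region $|x|$ can be comparable to $|y| = |t|/|x|$, so neither $a_n(x)$ nor $y^n$ dominates $F(x,t/x)$, and any naive pointwise bound of the form $|F(x,t/x)| \geq C|F(x,0)|$ fails when $m \geq 2$. The irreducibility of $F$ enters exactly here, in the form of the sharp Newton-polygon inequalities $\mathrm{ord}(a_i) \geq mi/n$ (all monomials of $F$ lie on or above the main edge joining $(0,n)$ and $(m,0)$), which follow from the fact that the $n$ Puiseux branches $y_j(x) = \phi(\zeta^j x^{1/n})$ of $F$ each have $x$-order $m/n$. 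This structure makes available a Varchenko-type anisotropic rescaling $x = |t|^{n/(n+m)} \xi$, $y = |t|^{m/(n+m)} \eta$ (with $\xi\eta = t/|t|$) under which the leading part of $F(x,t/x)$ becomes uniformly nonvanishing for $|\xi|, |\eta|$ of order $1$. Combining this ``middle-scale'' control with the simple estimate $|F(x,t/x)| \geq |F(x,0)|/2$ valid in the sub-region $|x|^m |w(0)|/(4|y|) \geq \|Q\|_\infty$ (where $Q = (F-a_n)/y$), one dominates $I_1^{\mathrm{in}}(t,\delta) \leq C\int_{|x|\leq \delta}|F(x,0)|^{-2c}\, dA(x) + C\int_{|y|\leq \delta}|F(0,y)|^{-2c}\, dA(y)$. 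Since $c < 1/m$ and $c < 1/n$ by hypothesis, both of these tend to $0$ as $\delta \to 0$; feeding this back into $I_1 = I_1^{\mathrm{out}} + I_1^{\mathrm{in}}$ and using monotone convergence on the outer part yields the desired limit for $I_1$, and the symmetric analysis for $I_2$ completes the proof.
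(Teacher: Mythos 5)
The overall skeleton you propose---Weierstrass preparation, the single-segment Newton polygon from irreducibility, an anisotropic rescaling at the Newton edge weights, and a three-zone decomposition with the outer zone converging to $I_0(R)$ by uniform approximation---is the same strategy the paper uses (with $t=s^{k+l}$, $z=x/s^l$, and annuli in $z$). However, there is a genuine gap in your treatment of the middle (rescaled) region, and it is precisely the point where the hypothesis $c<c_0(f_0)$ has to be used a second time.

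You assert that under the rescaling $x=|t|^{n/(n+m)}\xi$, $y=|t|^{m/(n+m)}\eta$ with $\xi\eta=t/|t|$, ``the leading part of $F(x,t/x)$ becomes uniformly nonvanishing for $|\xi|,|\eta|$ of order $1$.'' This is false: the zeros of $f_t$ sit exactly at this scale, so after rescaling the edge polynomial $G(\xi,\eta):=\sum_{ni+mj=mn}a_{ij}\xi^i\eta^j$ \emph{does} vanish on the annulus $|\xi|\sim 1$ (already for $F=x^2-y$, $G(\xi,1/\xi)=(\xi^3-1)/\xi$ vanishes at the cube roots of unity). Consequently a pointwise lower bound on $|F|$ of the type you invoke fails, and the ``area times sup'' estimate you would get from it is not available. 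What actually closes the argument is twofold and both halves must be supplied: (i) the rescaling produces a prefactor $|t|^{2n(1-mc)/(n+m)}$ (from the Jacobian and the homogeneity of $G$) which tends to $0$ because $c<1/m$; and (ii) the remaining $t$-independent integral $\int_{A(1/R_1,R_1)}|G(\xi,1/\xi)|^{-2c}\,dA(\xi)$ is \emph{finite}, which requires bounding the multiplicity of the zeros of $G(\xi,1/\xi)$. Since $G$ is quasi-homogeneous with nonzero $\xi^m$ and $\eta^n$ coefficients, each zero of $G(\xi,1/\xi)$ has multiplicity at most $\max(m,n)$, and the hypothesis $c<1/\max(m,n)$ (not just $c<1/m$) is exactly what makes this integral converge. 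Your sketch uses irreducibility only to get the Newton-polygon inequalities for (i), but it omits (ii) entirely; without it, the middle-zone contribution is not controlled and the proposed uniform bound $I_1^{\mathrm{in}}(t,\delta)\leq C\int_{|x|\leq\delta}|F(x,0)|^{-2c}+C\int_{|y|\leq\delta}|F(0,y)|^{-2c}$ does not follow from what you have written. Supplying the multiplicity bound on the Newton-edge polynomial and making the order of limits (first $t\to 0$ at fixed rescaled annulus radius $R_1$, then $\delta\to 0$, or equivalently the paper's $\epsilon_1\to 0$) explicit would repair the argument and bring it in line with the paper's proof.
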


The key idea of proof is first to examine the asymptotic behavior of the integral near $x$-axis and $y$-axis, then apply Theorem \ref{semicont smooth} and Lemma \ref{newton polygon}.
\\

When turning to non-holomorphic families, the semi-continuity property may fail to hold as the following example shows:

\begin{expl}[Counterexample in non-holomorphic families]\label{counterex 1}
Let $X=\mathbb{C}^2$, $\Delta=\mathbb{C}$,
$\pi(x,y)=xy$. Let $F(x,y)=x+y-2\sqrt{|xy|}$, $t\in \mathbb{R}_+$, $f_t(x,y)=F|_{X_{t}}$,
$p=(\sqrt{t},\sqrt{t})$, $p_0=(0,0)$. Then $c_{p_0}(f_0)=1$,
whereas $c_{p_t}(f_t)=\frac{1}{2}$. Therefore, in this case we have
\begin{displaymath}
1=c_{p_0}(f_0)>\varliminf_{t\rightarrow 0,t\in\mathbb{R}_+}
c_{p_t}(f_t)=\frac{1}{2}.
\end{displaymath}
which means that complex singularity exponents are NOT lower semi-continuous with respect to $t$. Here $F\in C_{loc}^{\frac{1}{2}}(\mathbb{C}^2)$.
\end{expl}

In Section \ref{sect 2}, we will introduce the concept of log canonical threshold and discuss its basic properties and equivalence with complex singularity exponents. The proofs of the main theorem and its stronger form will be given in Section \ref{sect 3}. In Section \ref{sect 4}, we will present the proof of Theorem \ref{eff irreducible}. Uniform upper bounds for integrals along fibers for any $F\in \mathbb{C}\{x,y\}$ will be given as a direct corollary of Theorem \ref{eff irreducible}. In Section \ref{sect 5}, we construct a sequence of non-holomorphic examples (see Example \ref{counterex n}) which do not satisfy the lower semi-continuity property besides Example \ref{counterex 1}. The key idea of construction is to find suitable homogeneous polynomials of $\sqrt{x}$ and $\sqrt{y}$. The families in Example \ref{counterex n} are $C_{loc}^{n+\frac{1}{2}}$ for arbitrary large $n$ but not $C^\infty$.

\paragraph{Acknowlegdements:} I would like to thank my thesis advisor Professor Gang Tian for giving me this problem and for his continuous support. I thank Professor Chenyang Xu for his useful comments and helps on the ACC for the log canonical threshold and related topics in algebraic geometry. I thank my teachers Professor Jinxing Cai and Professor Xiang Ma for their interest in this work and for their encouragements. I also thank my friends Yalong Shi, Alberto Della Vedova, Feng Wang and Heather Macbeth for many helpful discussions.

\section{Preliminaries on log canonical threshold}\label{sect 2}

In this section, we will mainly discuss the log canonical threshold --- an algebraic counterpart
 of the complex singularity exponent. The following discussion will
 provide us methods from algebraic geometry to compute complex singularity exponent.

The log canonical threshold is a fundamental invariant in birational geometry.
It was Shokurov who introduced it in the context of birational
geometry in \cite{sho92}. Next, we give the definition of log canonical threshold for Cartier divisors.

\begin{defn}[\cite{kol97} 8.1]
Let $X$ be a normal algebraic variety with at worst log canonical
singularities and let $D$ be an effective $\mathbb{Q}$-Cartier
divisor on $X$. The log canonical threshold of $(X,D)$ at point
$p\in X$ is defined by
\begin{displaymath}
lct_p(X,D)=\sup\{c|(X,cD) \textrm{ is log canonical in an open
neighborhood of }p\}.
\end{displaymath}
If $D=(f=0)$ then we always use the notation $lct_p(f)$ for $lct_p(X,D)$.
\end{defn}

For smooth varieties, we have an elegant equivalence between complex singularity exponent and log canonical threshold. (See Section 2 in \cite{kol08} for a detailed discussion.)

\begin{prop}[\cite{kol97} 8.2]\label{analytic LCT}
Let $X$ be a smooth variety over $\mathbb{C}$, $p\in X$ a point and
$f$ a nonzero regular function on $X$. Then

\begin{displaymath}
lct_p(f)=\sup\{c: |f|^{-2c}\textrm{ is locally $L^1$ near }p\}.
\end{displaymath}
which means $lct_p(f)=c_p(f)$.
\end{prop}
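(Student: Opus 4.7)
\medskip

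\noindent\textbf{Proof proposal.} The plan is to reduce both quantities to the same explicit combinatorial expression computed on a log resolution of the pair $(X,D)$ where $D=\operatorname{div}(f)$. By Hironaka's theorem, I would take a proper birational morphism $\mu:Y\to X$ such that $Y$ is smooth, $\mu$ is an isomorphism over $X\setminus\operatorname{Supp}(D)$, and the divisor $\mu^*D+K_{Y/X}$ has simple normal crossing support. Write
\begin{displaymath}
\mu^*D=\sum_i a_iE_i,\qquad K_{Y/X}=\sum_i b_iE_i,
\end{displaymath}
where $E_i$ runs over the irreducible components of $\operatorname{Supp}(\mu^*D)\cup\operatorname{Exc}(\mu)$, with $a_i\geq 0$ and $b_i\geq 0$ integers (since $X$ is smooth and $\mu^*D$ is effective).

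First, I would recall the standard discrepancy characterization of the log canonical threshold: the pair $(X,cD)$ is log canonical in a neighborhood of $p$ if and only if for every component $E_i$ whose image $\mu(E_i)$ meets $p$, one has $b_i-ca_i\geq -1$. Consequently
\begin{displaymath}
\operatorname{lct}_p(f)=\min_{i:\,p\in\mu(E_i)}\frac{b_i+1}{a_i},
\end{displaymath}
with the convention that components with $a_i=0$ impose no constraint.

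Next, for the analytic side I would apply the change of variables formula: for a small neighborhood $U$ of $p$,
\begin{displaymath}
\int_U|f|^{-2c}\,dV=\int_{\mu^{-1}(U)}|\mu^*f|^{-2c}\,|J_\mu|^2\,dV,
\end{displaymath}
where $J_\mu$ is the Jacobian determinant of $\mu$ in local coordinates. Around each point $q\in\mu^{-1}(p)$ I choose SNC coordinates $(z_1,\dots,z_n)$ in which the relevant $E_i$ are coordinate hyperplanes; then locally $|\mu^*f|=|u|\prod|z_i|^{a_i}$ and $|J_\mu|=|v|\prod|z_i|^{b_i}$ for nowhere-vanishing holomorphic units $u,v$. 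The local integrand becomes a product $\prod|z_i|^{2(b_i-ca_i)}$ times a smooth positive factor, so integrability of $|f|^{-2c}$ near $p$ reduces to integrability of $\prod|z_j|^{2(b_j-ca_j)}$ near $0$ in $\mathbb{C}^n$, which by Fubini holds iff $b_j-ca_j>-1$ for every $j$, and fails otherwise. Taking the supremum over $c$ and the infimum over all points $q\in\mu^{-1}(p)$ (covered by finitely many such charts, using properness of $\mu$) yields exactly the same formula $\min_{i:\,p\in\mu(E_i)}(b_i+1)/a_i$.

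The two expressions coincide, which proves $\operatorname{lct}_p(f)=c_p(f)$. The only nontrivial point is the local analytic computation: the boundary case $c=(b_i+1)/a_i$ produces a divergent integral (a logarithmic factor in one variable multiplied by a strictly positive smooth function on the remaining directions), so the suprema are genuinely attained at the same threshold and not shifted by a boundary term. This is the step I would treat most carefully, but it is a routine Fubini-type estimate rather than a deep obstacle.
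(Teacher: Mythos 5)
Your proof is correct and follows the standard route. The paper does not give its own proof of this proposition (it is cited from \cite{kol97}), but it records as Propositions \ref{compute LCT} and \ref{compute CSE} precisely the two matching log-resolution formulas you derive for $lct_p(f)$ and $c_p(f)$, and comparing them is exactly your argument.
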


In order to compute log canonical threshold in general, we introduce the definition of \textit{relative canonical class} first.

\begin{defn}[\cite{dflm11} A.11]
Let $R$ be a commutative ring with identity. Suppose that $g: Y\rightarrow X$ is a proper birational morphism of schemes over $R$, with $Y$ nonsingular. If $r K_X$ is Cartier, then there is a unique $\mathbb{Q}$-divisor $K_{Y/X}$ supported on the exceptional locus of $g$ such that $r K_Y$ and $g^*(r K_X)+r K_{Y/X}$ are linearly equivalent. If $X$ is nonsingular, then $K_{Y/X}$ is effective and its support is the exceptional locus $Ex(g)$. We call $K_{Y/X}$ the \textit{relative canonical
divisor}  w.r.t. $g$.
\end{defn}

\begin{defn}
Let $X$ and $Y$ be complex manifolds, $g:Y\rightarrow X$ is a proper bimeromorphic morphism, then ${\rm Jac}\,g$ is a set of holomorphic functions defined on any sufficiently small holomorphic coordinate chart of $Y$, which differs by a multiple of a non-vanishing holomorphic function in the intersection of any two overlapping holomorphic coordinate charts. We define a divisor $K_{Y/X}$ on $Y$, satisfying for every sufficiently small holomorphic coordinate chart $U$, we have $K_{Y/X}\big|_U=({\rm Jac} \, g=0)$. Of course $K_{Y/X}$ is well-defined, we call it the \textit{relative canonical divisor} w.r.t. $g$.
\end{defn}

Next two propositions provide us to compute log canonical threshold by resolution of singularities.

\begin{prop}[\cite{kol97} 8.5] \label{compute LCT}
Let $X$ be a normal algebraic variety with at worst log canonical
singularities and let $f$ be a nonzero regular function on $X$. Let
$p\in X$ be a point. Let $g:Y\rightarrow X$ be a proper birational
morphism where $Y$ is smooth and $X$ is $\mathbb{Q}$-Gorenstein. We may write

\begin{displaymath}
K_{Y/X}=\sum_{i=1}^m k_i E_i \textrm{ and } V(f\circ g)=\sum_{i=1}^m
a_i E_i.
\end{displaymath}
where $E_i$ are different prime divisors on $Y$. Then we have
\begin{displaymath}
lct_p(f)\leq \min_{i: p\in g(E_i)} \frac{k_i+1}{a_i}.
\end{displaymath}
The equality holds if $\sum_i E_i$ is a simple-normal-crossing
divisor, i.e. $g:Y\rightarrow X$ is a log resolution of $(X,V(f))$. In
particular, $lct_p(f)\in \mathbb{Q}$.
\end{prop}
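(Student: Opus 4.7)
My plan is to split the proof in two: first establish the equality statement when $\sum_i E_i$ is simple normal crossing (so that $g$ is a log resolution of $(X, V(f))$), then bootstrap to the general inequality by further resolving $Y$.

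For the SNC case, I would combine the defining relations of $K_{Y/X}$ and $V(f\circ g)$ into the single discrepancy identity
\begin{displaymath}
K_Y \;=\; g^{*}\!\bigl(K_X + c\cdot V(f)\bigr) + \sum_i (k_i - c\, a_i)\, E_i,
\end{displaymath}
read after multiplying through by a positive integer $r$ so that $rK_X$ is Cartier (this is where the $\mathbb{Q}$-Gorenstein hypothesis enters). Because $\sum_i E_i$ is SNC, this is the discrepancy expansion on a log resolution, so by the standard discrepancy criterion for log canonical singularities the pair $(X, c\cdot V(f))$ is log canonical in some neighborhood of $p$ if and only if $k_i - c\, a_i \geq -1$ for every index $i$ with $p\in g(E_i)$, equivalently $c\leq (k_i+1)/a_i$ for each such $i$. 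Taking the supremum over admissible $c$ then gives $lct_p(f) = \min_{i:\, p\in g(E_i)} (k_i+1)/a_i$, which is the equality claim.

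For the general inequality, I would invoke Hironaka's resolution theorem to produce a proper birational morphism $\pi:Y'\to Y$ with $Y'$ smooth such that $\pi$ is an isomorphism off $V(f\circ g)\cup\bigcup_i E_i$ and such that $V(f\circ g\circ\pi)$ together with the exceptional locus of $\pi$ is simple normal crossing. Then $g':=g\circ\pi:Y'\to X$ is a log resolution of $(X, V(f))$. Let $\widetilde{E}_i\subset Y'$ denote the strict transform of $E_i$. Because $\pi$ is an isomorphism at the generic point of $E_i$, the coefficient of $\widetilde{E}_i$ in $V(f\circ g')$ is still $a_i$ and its coefficient in $K_{Y'/X}$ is still $k_i$; moreover $p\in g(E_i)$ iff $p\in g'(\widetilde{E}_i)$. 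Applying the SNC case to $g'$ yields
\begin{displaymath}
lct_p(f)\;=\;\min_{j:\, p\in g'(E'_j)}\frac{k'_j+1}{a'_j}\;\leq\;\frac{k_i+1}{a_i}
\end{displaymath}
for every $i$ with $p\in g(E_i)$, where $\{E'_j\}$ enumerates the prime components of $V(f\circ g')$; taking the minimum over such $i$ gives the desired inequality.

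The main obstacle is Step 1, specifically the correct interpretation of the discrepancy identity when $X$ is only $\mathbb{Q}$-Gorenstein (handled by clearing denominators by $r$) and the reduction of log canonicity to the pointwise inequality $k_i - c\, a_i \geq -1$ on a single log resolution. Granting these, the key fact powering Step 2 is the elementary but crucial observation that strict transforms under a further resolution preserve both the multiplicity in the pulled-back divisor and the coefficient in the relative canonical divisor, which is what allows the bound to transfer from the SNC model back to the original $g$.
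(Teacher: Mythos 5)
The paper itself does not prove this proposition: it is quoted directly from Koll\'ar's \emph{Singularities of pairs} (cited as \cite{kol97}, 8.5) and used as a black box for the computation of log canonical thresholds. So there is no internal proof to compare against; what can be checked is whether your argument is sound and standard.

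Your proof is correct and is essentially the textbook argument. Step 1 correctly converts the two decompositions into the discrepancy identity $K_Y = g^*(K_X + c\, V(f)) + \sum_i (k_i - c\, a_i) E_i$ (after clearing denominators by the Cartier index $r$ of $K_X$), and then uses the standard fact that for an SNC log resolution, $(X, c\, V(f))$ is log canonical near $p$ iff $k_i - c\, a_i \geq -1$ for each $i$ with $p \in g(E_i)$, giving the equality $lct_p(f) = \min (k_i+1)/a_i$. Step 2 correctly bootstraps to the inequality: further resolving by $\pi: Y' \to Y$ to reach a log resolution $g' = g\circ\pi$, observing that $\pi$ is an isomorphism near the generic point of each $E_i$, so the strict transform $\widetilde E_i$ retains coefficient $a_i$ in $V(f\circ g') = \pi^*V(f\circ g)$ and coefficient $k_i$ in $K_{Y'/X} = K_{Y'/Y} + \pi^* K_{Y/X}$, while $g'(\widetilde E_i) = g(E_i)$. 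The only loose phrasing is "isomorphism off $V(f\circ g)\cup\bigcup_i E_i$" --- what you actually need (and what Hironaka provides) is that $\pi$ is an isomorphism over a neighborhood of the generic point of each $E_i$, which is automatic since $Y$ is smooth and a prime divisor on a smooth variety is generically a smooth point of the total space and of the divisor. With that minor correction, the argument is complete and matches the standard proof of Koll\'ar's result.
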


\begin{prop}[\cite{kol08} 11] \label{compute CSE}
Let $X$ be a complex manifold, $f$ is a nonzero regular function on $X$. Let
$p\in X$ be a point. Let $g:Y\rightarrow X$ be a proper bimeromorphic
morphism where $Y$ is a complex manifold. We may write

\begin{displaymath}
K_{Y/X}=\sum_{i=1}^m k_i E_i \textrm{ and } V(f\circ g)=\sum_{i=1}^m
a_i E_i.
\end{displaymath}
where $E_i$ are different prime divisors on $Y$. Then we have
\begin{displaymath}
c_p(f)\leq \min_{i: p\in g(E_i)} \frac{k_i+1}{a_i}.
\end{displaymath}
The equality holds if $\sum_i E_i$ is a simple-normal-crossing
divisor, i.e. $g:Y\rightarrow X$ is a log resolution of $(X,V(f))$. In
particular, $c_p(f)\in \mathbb{Q}$.
\end{prop}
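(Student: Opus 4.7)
The plan is to reduce the computation of $c_p(f)$ to a finite local integrability problem on $Y$ via change of variables. Since $g$ is proper and bimeromorphic, it restricts to a biholomorphism off a nowhere-dense analytic subset; hence for any open $U\ni p$ with $V=g^{-1}(U)$,
\begin{equation*}
\int_U|f|^{-2c}\,dV_X=\int_V|f\circ g|^{-2c}|{\rm Jac}\,g|^2\,dV_Y.
\end{equation*}
Properness of $g$ makes $g^{-1}(p)$ compact, so the $L^1$ condition on the right-hand side can be tested in finitely many coordinate charts covering the fiber.

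For the equality case, assume $\sum_i E_i$ is SNC. Around any $q\in g^{-1}(p)$, pick holomorphic coordinates $(z_1,\ldots,z_n)$ in which the components of $\sum_i E_i$ through $q$ are exactly the coordinate hyperplanes $\{z_j=0\}$ for $j$ in some index subset, writing $i(j)$ for the index labeling $\{z_j=0\}$. Reading off the prescribed multiplicities,
\begin{equation*}
f\circ g=u\prod_j z_j^{a_{i(j)}},\qquad {\rm Jac}\,g=v\prod_j z_j^{k_{i(j)}},
\end{equation*}
for non-vanishing holomorphic functions $u,v$. The integrand on this chart becomes $|u|^{-2c}|v|^2\prod_j|z_j|^{2(k_{i(j)}-ca_{i(j)})}$, which is locally $L^1$ near $q$ iff $c<(k_{i(j)}+1)/a_{i(j)}$ for every index $j$. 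Every $E_i$ with $p\in g(E_i)$ is visible in this way from some chart around a point $q\in E_i\cap g^{-1}(p)$, while components disjoint from the fiber impose no constraint; combining over a finite cover gives $c_p(f)=\min_{i:\,p\in g(E_i)}(k_i+1)/a_i$.

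For the inequality in the general (non-SNC) case, invoke Hironaka's theorem to build a further proper bimeromorphic morphism $h:Y'\rightarrow Y$ with $Y'$ smooth such that $g'=g\circ h$ is a log resolution of $(X,V(f))$. For each $i$ with $p\in g(E_i)$, the strict transform $\tilde E_i\subset Y'$ is not $h$-exceptional. The identities $K_{Y'/X}=h^*K_{Y/X}+K_{Y'/Y}$ and $V(f\circ g')=h^*V(f\circ g)$, combined with the observation that $\tilde E_i$ is not contained in $h^{-1}(E_j)$ for $j\neq i$, show that the coefficient of $\tilde E_i$ is $k_i$ in $K_{Y'/X}$ and $a_i$ in $V(f\circ g')$. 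Since $g'(\tilde E_i)=g(E_i)\ni p$, the already-established SNC case applied to $g'$ yields $c_p(f)\leq(k_i+1)/a_i$, and minimizing over $i$ produces the desired bound.

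The main obstacle is the chart-by-chart local calculation in the SNC case: one must carefully identify which prime components of $\sum_i E_i$ meet each point of the compact fiber $g^{-1}(p)$, verify that the non-vanishing factors $u,v$ and the Lebesgue measure in the transverse directions do not affect the integrability threshold, and confirm that a single component $E_i$ meeting $g^{-1}(p)$ at several points contributes the same constraint $(k_i+1)/a_i$ everywhere, so that the minimum over a finite cover depends only on the abstract pairs $(k_i,a_i)$ rather than on the choice of chart.
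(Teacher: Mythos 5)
The paper cites this proposition to Koll\'ar (\cite{kol08}, Prop.~11) without supplying a proof, so there is no in-paper argument to compare against. Your reconstruction is the standard one and is correct: change of variables together with properness of $g$ reduces $c_p(f)$ to finitely many local $L^1$ tests on charts covering the compact fiber $g^{-1}(p)$; the SNC local monomial computation (with the bounded positive factor coming from the Hermitian metric being irrelevant to integrability, and components with $a_i=0$ imposing no constraint) yields exactly the threshold $\min_{i:\,p\in g(E_i)}(k_i+1)/a_i$; and for general $g$, passing to a further log resolution $g'=g\circ h$ and using $K_{Y'/X}=h^*K_{Y/X}+K_{Y'/Y}$ together with $V(f\circ g')=h^*V(f\circ g)$ shows that the strict transform of each $E_i$ meeting $g^{-1}(p)$ carries the unchanged pair $(k_i,a_i)$ and still has $p$ in its image under $g'$, so the already-established SNC equality for $g'$ gives $c_p(f)\le(k_i+1)/a_i$ for each such $i$.
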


We finish this section by giving some basic properties of log canonical threshold and complex singularity exponent.

\begin{expl}
Suppose $X$ is a Riemann surface, then by Proposition \ref{compute LCT} and \ref{compute CSE}
$c_p(f)=lct_p(f)=\frac{1}{\textrm{ord}_p(f)}$.
\end{expl}

\begin{expl}
Suppose $X$ is a smooth variety, $V(f)$ is a prime divisor without any singularity, then by
Proposition \ref{compute LCT} and \ref{compute CSE} we have $c_p(f)=lct_p(f)=1$ for any $p\in V(f)$.
\end{expl}

\begin{prop}[\cite{mus11} 1.6]
For any $p\in V(f)$, we have $0\leq c_p(f)=lct_p(f)\leq 1$.
\end{prop}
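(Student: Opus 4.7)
The equality $c_p(f) = lct_p(f)$ is already Proposition \ref{analytic LCT}, and the nonnegativity $c_p(f) \geq 0$ is immediate from the definition as a supremum of nonnegative numbers. So the only content is the upper bound $c_p(f) \leq 1$, and my plan is to extract this bound from a single carefully chosen divisor on a log resolution.

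Concretely, by Hironaka's theorem there exists a log resolution $g : Y \to X$ of the pair $(X, V(f))$. Since $p\in V(f)$, I would choose an irreducible component $V_0$ of $V(f)$ that passes through $p$, and let $\widetilde{V}_0 \subset Y$ denote its strict transform. Then $\widetilde{V}_0$ is a prime (non-exceptional) divisor, and the restriction $g\big|_{\widetilde{V}_0}:\widetilde{V}_0 \to V_0$ is proper and birational, hence surjective. Consequently there is a point $q\in \widetilde{V}_0$ with $g(q)=p$, so $\widetilde{V}_0$ is one of the divisors $E_i$ in the expressions of Proposition \ref{compute CSE} with $p\in g(\widetilde{V}_0)$.

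Because $\widetilde{V}_0$ is non-exceptional, its coefficient in $K_{Y/X}$ is $k_0=0$; on the other hand, since $V_0 \subseteq V(f)$, its coefficient in $V(f\circ g)=g^{*}V(f)$ equals $a_0=\operatorname{ord}_{V_0}(f) \geq 1$. Applying the equality clause of Proposition \ref{compute CSE} (legitimate because $g$ is a log resolution), we conclude
\begin{displaymath}
c_p(f) \;\leq\; \frac{k_0+1}{a_0} \;=\; \frac{1}{a_0} \;\leq\; 1.
\end{displaymath}

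There is essentially no obstacle here; the one subtle point is ensuring that the strict transform of a component of $V(f)$ through $p$ actually meets the fiber $g^{-1}(p)$, and this is immediate from surjectivity of a proper birational morphism onto its image applied to $g|_{\widetilde{V}_0}$. Everything else is a direct invocation of Hironaka's theorem, Proposition \ref{analytic LCT}, and Proposition \ref{compute CSE}.
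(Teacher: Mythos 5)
The paper cites this proposition directly from \cite{mus11} without supplying a proof, so there is no in-paper argument to compare against; I can only assess your proof on its own terms, and it is correct. The equality $c_p(f)=lct_p(f)$ is indeed Proposition \ref{analytic LCT}, nonnegativity is built into the definition, and your derivation of the upper bound is sound: the strict transform $\widetilde{V}_0$ of a component of $V(f)$ through $p$ is a prime, non-exceptional divisor, so $k_0=0$; its coefficient in $g^*V(f)$ is $a_0=\operatorname{ord}_{V_0}(f)\geq 1$; and since $g|_{\widetilde{V}_0}$ is proper with dense image in $V_0$, its image is all of $V_0$, so $p\in g(\widetilde{V}_0)$ and Proposition \ref{compute CSE} gives $c_p(f)\leq\frac{1}{a_0}\leq 1$.

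One simplification worth noting: you invoke Hironaka and the equality clause of Proposition \ref{compute CSE}, but the inequality clause already holds for \emph{any} proper bimeromorphic morphism, with no simple-normal-crossing hypothesis. Taking $g=\mathrm{id}_X$ gives $K_{X/X}=0$, and each prime component $E_i$ of $V(f)$ through $p$ appears with multiplicity $a_i\geq 1$, so $c_p(f)\leq\min_{i:p\in E_i}\frac{1}{a_i}\leq 1$ immediately, with no resolution and no discussion of strict transforms. Your version is somewhat more machinery than needed, but every step in it is valid.
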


At the end of this section, we give the following definition of complex singularity exponents for any possibly singular analytic subvarieties:

\begin{defn}
Let $X$ be a complex manifold and $g$ be a Hermitian metric on $X$.
Suppose $W$ is a possibly singular analytic subvariety of $X$, $p\in W$ is a given point. Let $f$ be a holomorphic function defined on $W$.
Define $dV_W$ to be the volume form of the smooth locus of $W$ induced by the Hermitian metric $g$.

The complex singularity exponent of $f$ at
$p$ is defined to be the nonnegative number
\begin{displaymath}
c_p(f)=\sup\{c\geq 0 : \int_{U}|f|^{-2c}dV_W<+\infty\textrm{ for some
open neighborhood }U\subset W\textrm{ of }p\}.
\end{displaymath}
\end{defn}

By the definition above, if $W$ has irreducible components $W_0,\cdots, W_k$, then for any holomorphic function $f$ on $W$, for any $p\in V(f)$, we have

\begin{displaymath}
c_p(f)=\min_{i: p\in W_i} c_p(f|_{W_i}).
\end{displaymath}

\section{Proofs of the main theorem and its stronger form}\label{sect 3}

In this section, we will present the proofs of the main theorem \ref{main thm} and its stronger form \ref{stronger main thm}. We divide the proofs into three subsections. In Subsection \ref{sect 3.1}, we prove a special case of the main theorem, where $X_0$ is assumed to be simple-normal-crossing. In Subsection \ref{sect 3.2},  we give the complete proof of the main theorem. In Subsection \ref{sect 3.3}, we present the proof of the stronger form of the main theorem.

Before we begin, let us fix some notations throughout this section. Let $X$ be a complex
manifold with $\textrm{dim}_{\mathbb{C}}X\geq 2$, $\pi:
X\rightarrow \Delta$ be a surjective flat holomorphic map with connected fibers, where
$\Delta$ is the open disk centered at $0$ in $\mathbb{C}$ or
$\mathbb{C}$ itself. Define $X_t:=\pi^{-1}(t)$. Suppose $X_0^{(i)}$ are the irreducible components of $X_0$.
We also suppose that
for any $t\in\Delta\setminus\{0\}$, $t$ is a regular value of $\pi$,
hence $X_t$ is smooth when $t\neq 0$.
Let $p_0$ be a singular point of $X_0$. Let $p_t\in X_t$ be a
sequence of points, such that $\lim_{t\rightarrow 0}
p_t=p_0$.

\subsection{When $X_0$ is simple normal crossing}\label{sect 3.1}

In this subsection, we will always assume $X_0$ has only simple-normal-crossing singularities, and we will prove the main theorem in this special case. We will generalize this special case in the next subsection.

\begin{thm}[Main Theorem for simple-normal-crossing $X_0$]\label{main thm 1}
Suppose $F: X\rightarrow
\mathbb{C}$ is a holomorphic function, $X_0^{(i)}$ is any irreducible component of $X_0$ containing $p_0$. Suppose $X_0$ has only simple-normal-crossing singularities. Denote $f_t=F|_{X_t}$, $f_0^{(i)}=F|_{X_0^{(i)}}$, then the inequality
\begin{displaymath}
c_{p_0}(f_0^{(i)})\leq \varliminf_{t\rightarrow 0} c_{p_t}(f_t)
\end{displaymath}
always holds.
\end{thm}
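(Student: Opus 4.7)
The plan is to pass to a single log resolution that simultaneously governs $c_{p_t}(f_t)$ for $t$ near $0$ and $c_{p_0}(f_0^{(i)})$ for each $i$, then compare the two via adjunction and inversion of adjunction.

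First I would apply Hironaka's theorem (Theorem~\ref{Hironaka}) to produce a proper bimeromorphic $g\colon Y\to X$, with $Y$ smooth, such that $g^{-1}(V(F)\cup X_0)\cup\mathrm{Ex}(g)$ is simple normal crossing, $g$ is an isomorphism over $X\setminus(V(F)\cup X_0^{\mathrm{sing}})$, and moreover $g|_{Y_t}\colon Y_t\to X_t$ is a log resolution of $(X_t,V(f_t))$ for each $t\ne 0$ close to $0$, while $g|_{Y_0^{(i)}}\colon Y_0^{(i)}\to X_0^{(i)}$ is a log resolution of $(X_0^{(i)},V(f_0^{(i)}))$ for each irreducible component. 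Writing $K_{Y/X}=\sum_\alpha k_\alpha D_\alpha$, $g^*V(F)=\sum_\alpha a_\alpha D_\alpha$, and $g^*X_0^{(j)}=Y_0^{(j)}+\sum_{\alpha\ne j}m_\alpha^{(j)}D_\alpha$, adjunction gives $K_{Y_t/X_t}=K_{Y/X}|_{Y_t}$ for $t\ne 0$ and $K_{Y_0^{(i)}/X_0^{(i)}}=(K_{Y/X}+Y_0^{(i)}-g^*X_0^{(i)})|_{Y_0^{(i)}}$.

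Next, by Proposition~\ref{compute CSE},
\[
c_{p_t}(f_t)=\min\bigl\{(k_\alpha+1)/a_\alpha:\,p_t\in g(D_\alpha\cap Y_t),\;a_\alpha>0\bigr\}\qquad(t\ne 0).
\]
The key observation is that $g^*X_t=Y_t$ as divisors for $t\ne 0$, so any $D_\alpha$ meeting a general fiber $Y_t$ must satisfy $\sum_j m_\alpha^{(j)}=0$, whence $m_\alpha^{(j)}=0$ for every $j$. I would then pick a sequence $t_n\to 0$ realising $\varliminf_{t\to 0}c_{p_t}(f_t)$, pass to a subsequence so that a single index $\alpha$ attains the above minimum for every $n$, choose $q_n\in D_\alpha\cap Y_{t_n}$ with $g(q_n)=p_{t_n}$, and extract a sub-subsequence with $q_n\to q_\infty\in D_\alpha\cap g^{-1}(p_0)$; in particular $p_0\in g(D_\alpha)$.

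Finally, I would invoke inversion of adjunction for the smooth divisor $X_0^{(i)}\subset X$ in the pair $(X,X_0^{(i)}+cV(F))$, which equates log canonicity of the pair at $p_0$ with that of $(X_0^{(i)},cV(f_0^{(i)}))$ and yields the extended formula
\[
c_{p_0}(f_0^{(i)})=\min\bigl\{(k_\beta-m_\beta^{(i)}+1)/a_\beta:\,p_0\in g(D_\beta),\;a_\beta>0\bigr\}.
\]
Taking $\beta=\alpha$ and using $m_\alpha^{(i)}=0$ gives $c_{p_0}(f_0^{(i)})\le(k_\alpha+1)/a_\alpha=c_{p_{t_n}}(f_{t_n})$, and letting $n\to\infty$ completes the argument. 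The main obstacle I anticipate is the possibility that $q_\infty\notin Y_0^{(i)}$, in which case a naive application of Proposition~\ref{compute CSE} to $g|_{Y_0^{(i)}}$ alone would not cover our $\alpha$, since $D_\alpha\cap Y_0^{(i)}$ need not meet $g^{-1}(p_0)$. Inversion of adjunction, or equivalently reading log discrepancies of $(X,X_0^{(i)}+cV(F))$ directly on the full $Y$, is what bypasses this obstacle by extending the minimum to all $\beta$ with $p_0\in g(D_\beta)$, irrespective of whether they meet $Y_0^{(i)}$.
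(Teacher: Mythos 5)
Your proposal is correct, and it follows a genuinely different route from the paper's Subsection 3.1 proof at the crucial final step. The paper proceeds as you do up through the adjunction computations: fibred Hironaka resolution, $K_{Y_t/X_t}=K_{Y/X}|_{Y_t}$ for $t\neq 0$, and $K_{Y_0^{(i)}/X_0^{(i)}}=(K_{Y/X}+Y_0^{(i)}-g^*X_0^{(i)})|_{Y_0^{(i)}}$ (its Lemma~\ref{compute ky0x0}, done by a direct Jacobian/Laplace-expansion calculation). The paper then bounds $c_{p_0}(f_0^{(i)})$ by applying Proposition~\ref{compute CSE} to the restricted resolution $g_0^{(i)}\colon Y_0^{(i)}\to X_0^{(i)}$ together with $Y_0^{(i)}-g^*X_0^{(i)}\leq 0$, and finishes by a closedness-of-$g(E_{m_0})$ argument as you do. The difference is your use of \emph{inversion of adjunction} to replace Proposition~\ref{compute CSE} on $Y_0^{(i)}$ with a direct lct computation of $(X,X_0^{(i)}+cV(F))$ on the ambient resolution $Y$.

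This difference is not cosmetic, and you have put your finger on precisely the subtle point. Proposition~\ref{compute CSE} applied to $g_0^{(i)}$ only yields a minimum over those indices $\beta$ with $g^{-1}(p_0)\cap D_\beta\cap Y_0^{(i)}\neq\emptyset$, whereas the paper writes the minimum over all $\beta$ with $p_0\in g(D_\beta)$ --- a larger index set, hence a smaller (unjustified) upper bound --- and the closedness argument only produces $q_\infty\in D_{m_0}\cap g^{-1}(p_0)$, with no guarantee that $q_\infty\in Y_0^{(i)}$. Inversion of adjunction, by reading the lct of $(X,X_0^{(i)}+cV(F))$ on the whole of $Y$, legitimately enlarges the index set to all $\beta$ with $p_0\in g(D_\beta)$ and hence covers the limit divisor $D_\alpha$ even when it misses the strict transform $Y_0^{(i)}$. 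The paper's complete proof in Subsection~\ref{sect 3.2} (the general, not-necessarily-SNC case) avoids this issue by an entirely different mechanism: it pulls back integrals through a resolution $h\colon Z\to X$, uses the Nullstellensatz to compare $h^*dV_{X_0^{(1)}}$ with $|\mathrm{Jac}\,h|^{2N}dV_{Z_0^{(1)}}$, and tracks the correction coefficients $l_i$, noting $l_{m_0}=0$ because $E_{m_0}\not\subset Y_0$ --- which is essentially the same observation as your $m_\alpha^{(i)}=0$ for $D_\alpha$ meeting general fibres, implemented without appealing to inversion of adjunction. So your proposal is a clean, correct alternative that trades the paper's elementary but more laborious machinery (Lemma~\ref{compute ky0x0}, Nullstellensatz trick) for a single appeal to the (deep, but standard) inversion-of-adjunction theorem. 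One small omission: you should note, as the paper does, that if $V(F)\supset X_0^{(i)}$ then $c_{p_0}(f_0^{(i)})=0$ and the inequality is trivial, so that the strict transform of $X_0^{(i)}$ contributes coefficient exactly $1$ in the lct computation on $Y$.
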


The key to the proof of Theorem \ref{main thm 1} is the following fibration version of the Hironaka resolution theorem:

\begin{thm}[\cite{km98} 0.3] \label{Hironaka}
Let $f:X\rightarrow C$ be a flat morphism of a reduced algebraic
variety over $\mathbb{C}$ (or a suitably small neighborhood of a
compact set of a reduced analytic space) to a non-singular curve $C$ and
$B\subset X$ a divisor. Then there exists a projective birational
morphism $g:Y\rightarrow X$ from a non-singular $Y$ such that
$Ex(g)+g^*B+(f\circ g)^*(c)$ is an simple-normal-crossing divisor for all $c\in C$,
here $Ex(g)$ represents the exceptional divisor of $g$.
\end{thm}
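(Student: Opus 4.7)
The plan is to prove this theorem as a consequence of Hironaka's principal resolution theorem via a careful inductive construction; I focus on the algebraic setting, with the analytic case being essentially identical.

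The first step is to reduce to the case where $X$ is smooth by applying Hironaka's desingularization theorem to $X$: this replaces $X$ by a resolution $X' \to X$ that is an isomorphism over the smooth locus, and one may absorb the resulting exceptional divisor into $B$. Since $C$ is a smooth curve and $f: X \to C$ is flat with smooth total space, generic smoothness implies that the set of critical values of $f$ is a finite subset $S_0 = \{c_1, \ldots, c_k\} \subset C$. Next, I would apply Hironaka's embedded resolution to the pair $(X, B + \sum_i f^*(c_i))$, obtaining a projective birational morphism $g_1: X_1 \to X$ with $X_1$ smooth such that $g_1^*(B + \sum_i f^*(c_i)) + Ex(g_1)$ is an SNC divisor. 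This secures the desired SNC condition for the originally distinguished values $c \in S_0$.

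The main obstacle is that $f \circ g_1$ may acquire additional critical values beyond $S_0$: blowing up a smooth center whose image in $C$ is not already in $S_0$ can create new singular fibers, since if the blow-up center projects onto an open subset of $C$, then the exceptional divisor meets many fibers nontrivially. The plan is to address this by iteration: letting $S_n$ denote the set of critical values of $f \circ g_n: X_n \to C$, apply Hironaka's embedded resolution to the pair $(X_n, g_n^*B + Ex(g_n) + \sum_{c \in S_n} (f \circ g_n)^*(c))$ at each stage. The crux is termination; one argues via a Hironaka-style invariant (a suitable refinement of the Hilbert-Samuel function that tracks the non-SNC loci across all fibers simultaneously) that strictly decreases at each stage and takes values in a well-ordered set, so the procedure must stabilize after finitely many steps.

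Finally, it remains to verify that once the procedure stabilizes, the resulting $g: Y \to X$ has the required property for every $c \in C$. For $c$ in the final critical set, the fiber is part of an SNC divisor by construction. For $c$ a regular value of $f \circ g$, the fiber $(f \circ g)^{-1}(c)$ is automatically smooth, and transversality with $Ex(g) + g^*B$ at each intersection point follows from the smoothness of $f \circ g$ there combined with the SNC structure of $g^*B + Ex(g)$ itself.
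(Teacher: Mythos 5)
The paper itself offers no proof of this statement—it is quoted verbatim from [KM98, Theorem 0.3]—so your attempt has to stand on its own, and it has two genuine gaps. First, your final verification for regular values is false: smoothness of the fibre $(f\circ g)^{-1}(c)$ together with the SNC property of $Ex(g)+g^*B$ does \emph{not} imply that their union is SNC. Take $X=\mathbb{C}^2$, $f(x,y)=y$, $B=\{y=x^2\}$: every value of $f$ is regular and $B$ is smooth (hence SNC), yet $B+f^*(0)$ has a tangential intersection at the origin and is not SNC. For the same reason the relevant finite ``bad'' set is not the set of critical values of $f$, but the larger (still finite) set of $c$ over which some stratum of the SNC divisor fails to meet the fibre transversally; its finiteness requires generic smoothness applied to $f$ restricted to each stratum of $Ex(g)+g^*B$, not merely to $X$. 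Second, the termination of your iteration is only asserted: the ``Hironaka-style invariant tracking the non-SNC loci across all fibres'' is never constructed, and without it the argument does not close.

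Both gaps disappear if you reorganize the proof so that no iteration is needed. First take a log resolution of $(X,B)$ alone, giving $X_1$ smooth with $B_1=g_1^*B+Ex(g_1)$ SNC; by generic smoothness applied to $f\circ g_1$ on $X_1$ and on each stratum of $B_1$, there is a finite set $S\subset C$ such that $B_1+(f\circ g_1)^*(c)$ is SNC for all $c\notin S$. The non-SNC locus of $B_1+\sum_{c\in S}(f\circ g_1)^*(c)$ is then contained in the finitely many fibres over $S$, so a second log resolution chosen (by the strong form of Hironaka) to be an isomorphism outside this locus changes nothing over $C\setminus S$ and introduces exceptional divisors only over fibres above $S$; hence the SNC property over $C\setminus S$ is preserved, it holds over $S$ by construction, and the theorem follows after this single additional step.
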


Let $V:=V(F)$ be the principal divisor of $F$ on $X$. By Theorem \ref{Hironaka}
above, we may choose a complex manifold $Y$ and a bimeromorphic
morphism $g:Y\rightarrow X$ such that $Ex(g)+g^*V+(\pi\circ g)^*(t)$
is simple-normal-crossing for every $t\in \Delta$. For simplicity, denote
$\widetilde{\pi}=\pi\circ g$, $Y_t=\pi^{-1}(t)$,
$\widetilde{V}=g^*V$, $g_t=g|_{Y_t}: Y_t\rightarrow X_t$. Thus,
$Ex(g)+g^*V+Y_t$ is simple-normal-crossing for every $t\in \Delta$.

Since a divisor is a finite $\mathbb{Z}-$linear combination of prime divisors, we
may assume that both $Ex(g)$ and $\widetilde{V}$ do not contain any
irreducible component of $Y_t$ whenever $t\neq 0$ and
$|t|<\epsilon_0$ for some $\epsilon_0>0$. Denote
$V_t=V|_{X_t}$ and $\widetilde{V}_t=\widetilde{V}|_{Y_t}$.

The proof is based on the computation of $c_{p_t}(f_t)$ and $c_{p_0}(f_0)$.
Next, we shall give several lemmas to compute $c_{p_t}(f_t)$ and $c_{p_0}(f_0)$.

\begin{lem} \label{log resolution}
For any $|t|<\epsilon_0$ and $t\neq 0$, $g_t: Y_t \rightarrow
X_t$ gives a log resolution $(Y_t, \widetilde{V}_t)$ of $(X_t,
V_t)$.
\end{lem}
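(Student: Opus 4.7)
The plan is to verify the three defining properties of a log resolution for the restriction $g_t: Y_t \to X_t$: (a) $Y_t$ is a smooth complex manifold, (b) $g_t$ is a proper bimeromorphic morphism, and (c) $Ex(g_t) + \widetilde{V}_t$ is simple-normal-crossing on $Y_t$. All three will follow from the SNC structure on $Y$ given by Theorem \ref{Hironaka} together with the standing assumption that $Ex(g)$ and $\widetilde V$ contain no irreducible component of $Y_t$ for $|t|<\epsilon_0$, $t\neq 0$.

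For (a), decompose $Y_t=\sum_i m_i E_i$ into prime components. Since $Ex(g)+\widetilde V+Y_t$ is SNC on $Y$, each $E_i$ is smooth. The hypothesis that no $E_i$ is contained in $Ex(g)$ provides a generic point $y_i\in E_i\setminus Ex(g)$; near $y_i$ the map $g$ is a local biholomorphism, so $\widetilde\pi=\pi\circ g$ identifies locally with $\pi$, and since $t$ is a regular value of $\pi$ the multiplicity $m_i$ must equal $1$. To see $Y_t$ has only one component, note that $E_i\not\subset Ex(g)$ makes $g|_{E_i}$ generically of full rank, so $g(E_i)$ has full dimension in $X_t$ and, by irreducibility of $X_t$, $g(E_i)=X_t$. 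If $Y_t$ had two distinct components $E_i\neq E_j$, pick a point $p\in X_t$ outside the proper analytic subset $g(Ex(g))\cup\bigcup_{a\neq b} g(E_a\cap E_b)$ of $X_t$; since $g$ is an isomorphism over $X\setminus g(Ex(g))$, the preimage $g^{-1}(p)$ is a single point, lying in at most one $E_i$ by the avoidance condition, contradicting surjectivity of $g|_{E_j}$. Hence $Y_t$ is reduced and irreducible, and so a smooth complex manifold.

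For (b), $g_t$ is proper as a restriction of $g$, surjective onto $X_t$, and a biholomorphism over the open dense subset $X_t\setminus g(Ex(g))\subset X_t$, so $g_t$ is proper bimeromorphic. For (c), any $y\in Y_t\setminus Ex(g)$ is a point of local biholomorphy for $g$ and hence for $g_t$, showing $Ex(g_t)\subset Ex(g)\cap Y_t$. Write the prime components of $Ex(g)$ as $\{F_j\}$ and those of $\widetilde V$ as $\{G_l\}$; by the standing assumption, none of these coincide with the unique component $E_1=Y_t$. Transversality on $Y$ then guarantees that each $F_j\cap Y_t$ and each $G_l\cap Y_t$ is smooth of codimension one in $Y_t$, and any multiple intersection on $Y_t$ corresponds to a transverse intersection of divisors among $\{E_1,F_j,G_l\}$ on $Y$, which is SNC by hypothesis. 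Therefore $Ex(g_t)+\widetilde V_t$ is SNC on $Y_t$, completing the verification that $g_t$ is a log resolution of $(X_t,V_t)$.

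The main obstacle, in my view, is the irreducibility step in (a): it combines the SNC structure, the bimeromorphicity of $g$, and the hypothesis on $Ex(g)\cap Y_t$ in a slightly subtle way. Once irreducibility of $Y_t$ is in hand, both the bimeromorphicity of $g_t$ and the SNC property of $Ex(g_t)+\widetilde V_t$ reduce to routine bookkeeping with transverse smooth divisors on $Y$ and their restrictions to $Y_t$.
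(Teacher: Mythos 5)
Your proof is correct and follows essentially the same route as the paper: restrict the Hironaka resolution to the fiber, use that $g$ is an isomorphism off $Ex(g)$ to get bimeromorphicity of $g_t$, observe $Ex(g_t)\subset Ex(g)\cap Y_t$, and invoke transversality of $Ex(g)+\widetilde V+Y_t$ for the SNC condition. You are somewhat more explicit than the paper in establishing reducedness and irreducibility of $Y_t$ (checking multiplicity one via the regular value, and the single-component property via a generic point argument), whereas the paper gets smoothness of $Y_t$ directly from the SNC structure and derives irreducibility from that of $X_t$ together with bimeromorphicity of $g_t$; the underlying ideas are the same.
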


\begin{proof}

By definition of $Ex(g)$ we have $g: Y-Ex(g)\rightarrow X-g(Ex(g))$
is an isomorphism. By restriction of $g$ to each fiber, we obtain
isomorphisms $g_t: Y_t-(Ex(g)\cap Y_t)\rightarrow X_t-(g(Ex(g))\cap
X_t)$. We need to show $g_t:Y_t\rightarrow X_t$ is a bimeromorphic
morphism for any $|t|<\epsilon$ and $t\neq 0$.

Since $g:Y\rightarrow X$ is a bimeromorphic morphism, the
codimension of $g(Ex(g))$ in $X$ must greater than $1$, hence
$g(Ex(g))$ does not contain any irreducible component of $X_t$ for
any $t\in\Delta$. Therefore, $X_t-(g(Ex(g))\cap X_t)$ is a Zariski open
subset of $X_t$ for any $t\in\Delta$. On the other hand, since
$Ex(g)$ does not contain any irreducible component of $Y_t$, so $Y_t-(Ex(g)\cap
Y_t)$ is also Zariski open in $Y_t$. Thus, $g_t:Y_t\rightarrow X_t$
is a bimeromorphic morphism for any $|t|<\epsilon$ and $t\neq 0$.

Next we will prove this lemma. Assume $|t|<\epsilon$ and $t\neq
0$. Since $X_t$ is irreducible, $g_t$ is a bimeromorphic morphism,
we have $Y_t$ is also irreducible. Moreover, by Hironaka resolution
theorem, we have $Ex(g)+\widetilde{V}+Y_t$ is simple-normal-crossing, so $Y_t$ is
itself simple-normal-crossing, thus smooth. By the transversal intersection property, we have
$Ex(g)|_{Y_t}+ \widetilde{V}_t$ is simple-normal-crossing. Since $Ex(g_t)\subset
(Ex(g)\cap Y_t)$, $Ex(g_t)+\widetilde{V}_t$ is also simple-normal-crossing. Therefore,
$(Y_t,\widetilde{V}_t)$ is a log resolution of $(X_t, V_t)$, hence
we prove the lemma.
\end{proof}

According to the lemma above, we calculate $c_{p_t}(f_t)$ explicitly by the adjunction formula in the following lemma:

\begin{lem} \label{compute cpnfn}
Suppose $\displaystyle{\widetilde{V}=\sum_{i=1}^m a_iE_i}$,
$\displaystyle{K_{Y/X}=\sum_{i=1}^m k_i E_i}$, where $a_i, k_i$ are
integers, $\{E_i\}_{i=1}^m$ are different prime divisors on $Y$. If
$|t|<\epsilon_0$ and $t\neq 0$, then we have

\begin{displaymath}
c_{p_t}(f_t)=\min_{i:p_t\in g(E_i)}\frac{k_i+1}{a_i}.
\end{displaymath}

\end{lem}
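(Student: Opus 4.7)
The plan is to apply Proposition \ref{compute CSE} on the log resolution provided by Lemma \ref{log resolution}, and then read off $K_{Y_t/X_t}$ and $\widetilde{V}_t$ from the divisors already known on $Y$ via restriction and adjunction. Since $g_t:(Y_t,\widetilde{V}_t)\rightarrow(X_t,V_t)$ is a log resolution for every $t\neq 0$ with $|t|<\epsilon_0$, Proposition \ref{compute CSE} applied to $f_t=F|_{X_t}$ immediately yields
\[
c_{p_t}(f_t)=\min_{F:\, p_t\in g_t(F)}\frac{k'_F+1}{a'_F},
\]
where $F$ ranges over the prime components of $\widetilde{V}_t$, and $a'_F, k'_F$ denote the respective multiplicities of $F$ in $\widetilde{V}_t$ and in $K_{Y_t/X_t}$. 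The task then reduces to matching these multiplicities with the coefficients $a_i,k_i$ on $Y$.

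For $\widetilde{V}_t$ I would simply restrict: $\widetilde{V}_t=\widetilde{V}|_{Y_t}=\sum_{i=1}^{m} a_i(E_i\cap Y_t)$. Because $Ex(g)+\widetilde{V}+Y_t$ is simple-normal-crossing by the choice of $g$, each $E_i$ meets $Y_t$ transversally, so $E_i\cap Y_t$ is a reduced (smooth, but possibly reducible) divisor on $Y_t$, and every one of its prime components inherits the coefficient $a_i$ in $\widetilde{V}_t$.

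For $K_{Y_t/X_t}$ I would use adjunction. The essential observation is that $t$ is a regular value of $\pi$ and, by the choice of $\epsilon_0$, no component of $Ex(g)$ or $\widetilde{V}$ is contained in $Y_t$; consequently $g^{*}X_t=Y_t$ as reduced divisors. Adjunction gives $K_{Y_t}=(K_Y+Y_t)|_{Y_t}$ and $g_t^{*}K_{X_t}=g^{*}(K_X+X_t)|_{Y_t}=(g^{*}K_X+Y_t)|_{Y_t}$, and subtracting yields
\[
K_{Y_t/X_t}=(K_Y-g^{*}K_X)\big|_{Y_t}=K_{Y/X}\big|_{Y_t}=\sum_{i=1}^{m}k_i\,(E_i\cap Y_t),
\]
so each prime component of $E_i\cap Y_t$ carries coefficient $k_i$ in $K_{Y_t/X_t}$.

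Putting the two restrictions together, every prime divisor $F$ appearing in $\widetilde{V}_t$ is a component of some $E_i\cap Y_t$ with $a'_F=a_i$ and $k'_F=k_i$, so the ratio $(k'_F+1)/a'_F$ depends only on $i$. Since $p_t\in g(E_i)\cap X_t$ holds if and only if some prime component of $E_i\cap Y_t$ contains a preimage of $p_t$ under $g_t$, the minimum over $F$ in Proposition \ref{compute CSE} collapses to the minimum over $i$ with $p_t\in g(E_i)$, giving the stated formula. The one subtle point, rather than a genuine obstacle, is the identity $g^{*}X_t=Y_t$ as reduced divisors for $0<|t|<\epsilon_0$; this is precisely what the Hironaka setup of Subsection \ref{sect 3.1} was arranged to provide, and without it the adjunction computation above would pick up spurious multiplicities.
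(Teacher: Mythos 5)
Your argument reaches the same conclusion $K_{Y_t/X_t}=K_{Y/X}\big|_{Y_t}$ as the paper, but by a genuinely different route: you invoke adjunction on $Y_t\subset Y$ and $X_t\subset X$ and use $g^*X_t=Y_t$ to cancel the normal-bundle terms, whereas the paper computes in local coordinates adapted to the fibration (with $\pi=x_n$, $\widetilde{\pi}=y_n$) and verifies ${\rm Jac}\,g={\rm Jac}\,g_t$ directly via Laplace expansion of the Jacobian matrix. The coordinate computation gives the equality of relative canonical divisors (not just classes) immediately; your adjunction chain, as written, really produces a linear equivalence $K_{Y_t/X_t}\sim K_{Y/X}\big|_{Y_t}$, and to upgrade this to an equality of divisors one should observe that both sides are supported on $Ex(g_t)=Ex(g)\cap Y_t$ (the inclusion $Ex(g)\cap Y_t\subset Ex(g_t)$ uses that no component of $Ex(g)$ lies in $Y_t$, so $g(E)\cap X_t$ has codimension at least two in $X_t$) and then appeal to the uniqueness of a $g_t$-exceptional divisor in a given linear equivalence class. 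This is a standard step, but it is worth spelling out, since without it the passage from canonical bundle identities to coefficient identities $k'_F=k_i$ is not purely formal; the paper's coordinate approach sidesteps the issue entirely.

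One further point you state implicitly but should make explicit, as the paper does: for $i\neq j$, $(E_i)_t$ and $(E_j)_t$ share no prime component. This follows because $E_i+E_j+Y_t$ is simple normal crossing, so $E_i\cap E_j\cap Y_t$ has codimension at least two in $Y_t$. Without this, a prime component $F$ of $\widetilde{V}_t$ could belong to several $(E_i)_t$, in which case its coefficient in $\widetilde{V}_t$ would be a sum of several $a_i$'s and the identification $a'_F=a_i$ would fail. Once both of these points are in place, your reduction of the minimum over prime components $F$ to a minimum over indices $i$ with $p_t\in g(E_i)$ is correct.
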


\begin{proof}
Assume $|t|<\epsilon$ and $t\neq 0$, then we have
\begin{displaymath}
\widetilde{V}_t=\sum_{i=1}^m a_i (E_i)_t,
\end{displaymath}

where $(E_i)_t= E_i |_{Y_t}$. Next we shall calculate $K_{Y_t/X_t}$.

For any $q\in Y_t$, denote $p:=g(q)\in X_t$. Since $\pi$ is regular
at $p$, we may choose a local coordinate $(x_1,\cdots, x_n)$ of $X$
near $p$ such that $\pi(x_1,\cdots, x_n)=x_n$. Moreover, we have
$Y_t=g^*X_t$ and $Y_t$ is smooth, so $\widetilde{\pi}$ is regular at
$q$, thus we may choose a local coordinate $(y_1,\cdots, y_n)$ of
$Y$ near $q$ such that $\widetilde{\pi}(y_1,\cdots,y_n)=y_n$. When
$x_n=t$ and $y_n=t$, $(x_1,\cdots,x_{n-1})$ and $(y_1,\cdots,y_{n-1})$
give the local coordinates of $Y_t$ and $X_t$ respectively.

Under these local coordinates, we have
$g(y_1,\cdots,y_n)=(x_1,\cdots,x_{n-1}, y_n)$ since
$\widetilde{\pi}=\pi\circ g$. Therefore, we have

\begin{displaymath}
\frac{\partial(x_1,\cdots,x_{n-1}, x_n)}{\partial(y_1,\cdots,
y_{n-1},y_n)}= \left(
\begin{matrix}
\frac{\partial(x_1,\cdots, x_{n-1})}{\partial(y_1,\cdots,y_{n-1})} &
\frac{\partial(x_1,\cdots, x_{n-1})}{\partial y_n}\\
&\\ 0 & 1
\end{matrix}
\right).
\end{displaymath}

Hence we have
\begin{displaymath}
\det\left(\frac{\partial(x_1,\cdots,x_{n-1},
x_n)}{\partial(y_1,\cdots,y_{n-1},y_n)}\right)
=\det\left(\frac{\partial(x_1,\cdots,
x_{n-1})}{\partial(y_1,\cdots,y_{n-1})}\right),
\end{displaymath}
which is equivalent to ${\rm Jac} \, g={\rm Jac} \,g_t$.
Therefore, we have
\begin{align*}
K_{Y_t/X_t} & =({\rm Jac} \,g_t=0)\\
 & =({\rm Jac} \,g=0)\big|_{Y_t}\\
 & =K_{Y/X}\big|_{Y_t}.
\end{align*}

Thus we have

\begin{displaymath}
K_{Y_t/X_t}=\sum_{i=1}^m k_i (E_i)_t.
\end{displaymath}

It might happen that $(E_i)_t$ is reducible, and has several
components. However, for any $i\neq j$ since $E_i+E_j+Y_t$ is simple-normal-crossing,
so $\textrm{codim}_{Y_t}(E_i\cap E_j\cap Y_t ) \geq 2$, hence
$(E_i)_t$ and $(E_j)_t$ have no component in common, for otherwise
$\textrm{codim}_{Y_t}(E_i\cap E_j\cap Y_t)=1$.

Therefore, by Proposition \ref{compute CSE}, for any $p\in X_t$ we have
\begin{displaymath}
c_{p}(F|_{X_t})=\min_{i:p\in g(E_i\cap Y_t)}\frac{k_i+1}{a_i}.
\end{displaymath}

Hence
\begin{displaymath}
c_{p_t}(f_t)=\min_{i:p_t\in g(E_i)}\frac{k_i+1}{a_i}.
\end{displaymath}
\end{proof}

Next, let us compute $c_{p_0}(f_0)$.
Suppose $X_0=X_0^{(1)}\cup \cdots \cup X_0^{(k)}$,
$Y_0=Y_0^{(1)}\cup \cdots  \cup Y_0^{(l)}$, where $X_0^{(i_1)}$ and
$Y_0^{(j_1)}$ are different components of $X_0$ and $Y_0$.
Similarly, we have $g_0: Y_0-(Ex(g)\cap Y_0)\rightarrow
X_0-(g(Ex(g))\cap X_0)$ is isomorphism, and
$\textrm{codim}_{X_0}(g(Ex(g))\cap X_0)\geq 1$. Hence for any $1\leq
i_1\leq k$, there exists a unique $1\leq j_1\leq l$, such that
$g_0^{(j_1)}:Y_0^{(j_1)}\rightarrow X_0^{(i_1)}$ is a bimeromorphic
morphism, where $X_0^{(i_1)}$ and $Y_0^{(j_1)}$ are smooth.

Without lose of generality, we may assume $j_1=i_1$, that is, for
any $1\leq j \leq k$, $g_0^{(j)}:Y_0^{(j)}\rightarrow X_0^{(j)}$ is
a bimeromorphic morphism.

Suppose $V$ and $X_0$ have no components in common, otherwise
$c_{p_0}(f_0)=0$ and the inequality automatically holds. As a
result, $\widetilde{V}$ does not contain $Y_0^{(j)}$. Therefore, we
have

\begin{displaymath}
\widetilde{V}_0^{(j)}:=\widetilde{V}|_{Y_0^{(j)}}=\sum_{i=1}^m a_i
(E_i)_0^{(j)},
\end{displaymath}
where $(E_i)_0^{(j)}:=E_i|_{Y_0^{(j)}}$.

Using the same argument in the proof of Lemma \ref{compute cpnfn}, we
also have $(Y_0^{(j)}, \widetilde{V}_0^{(j)})$ a log resolution of
$(X_0^{(j)}, V_0^{(j)})$. Next lemma compute the relative canonical class of $(Y_0^{(j)},X_0^{(j)})$.
\\

\begin{lem} \label{compute ky0x0}
For any $1\leq j\leq k$, we have
\begin{displaymath}
K_{Y_0^{(j)}/X_0^{(j)}}=K_{Y/X}\big|_{Y_0^{(j)}}+\left(Y_0^{(j)}-g^*X_0^{(j)}\right)\big|_{Y_0^{(j)}}.
\end{displaymath}
\end{lem}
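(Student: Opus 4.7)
The plan is a local Jacobian computation adapted to the SNC structure, paralleling the proof of Lemma \ref{compute cpnfn}. Fix $q \in Y_0^{(j)}$ and set $p = g(q) \in X_0^{(j)}$. Since $X_0$ is simple-normal-crossing near $p$ and $X_0^{(j)}$ is one of its smooth components, I can choose local holomorphic coordinates $(x_1, \ldots, x_n)$ on $X$ centered at $p$ with $X_0^{(j)} = \{x_1 = 0\}$. Using that $Ex(g) + g^*V + Y_0$ is simple-normal-crossing near $q$, I likewise choose local coordinates $(y_1, \ldots, y_n)$ on $Y$ centered at $q$ with $Y_0^{(j)} = \{y_1 = 0\}$. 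Then $X_0^{(j)}$ and $Y_0^{(j)}$ are parametrized by $(x_2, \ldots, x_n)$ and $(y_2, \ldots, y_n)$ respectively, and $g_0^{(j)}$ is given in these coordinates by $(x_2 \circ g, \ldots, x_n \circ g)|_{y_1 = 0}$.

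Because $g(Y_0^{(j)}) \subset X_0^{(j)}$, the function $x_1 \circ g$ vanishes on $\{y_1 = 0\}$, so I can write $x_1 \circ g = y_1 \cdot h(y)$ for some holomorphic $h$. The bimeromorphicity of $g_0^{(j)}: Y_0^{(j)} \to X_0^{(j)}$ (established earlier in this subsection) forces ${\rm ord}_{Y_0^{(j)}}(x_1 \circ g) = 1$, i.e., $h$ does not vanish identically on $Y_0^{(j)}$; in particular $(h = 0)$ is a well-defined divisor near $q$ containing no $Y_0^{(j)}$-component. Expanding the Jacobian determinant along its first row --- whose only nonzero entry at $\{y_1 = 0\}$ is $\partial(x_1 \circ g)/\partial y_1 = h(0, y_2, \ldots, y_n)$, since for $k \ge 2$ one has $\partial(x_1 \circ g)/\partial y_k = y_1 \cdot \partial h/\partial y_k$, which vanishes there --- gives
\begin{displaymath}
{\rm Jac}\,g\,\big|_{y_1 = 0} = h(0, y_2, \ldots, y_n) \cdot {\rm Jac}\,g_0^{(j)}.
\end{displaymath}
Translated into the paper's divisor language, this reads $K_{Y/X}|_{Y_0^{(j)}} = (h = 0)|_{Y_0^{(j)}} + K_{Y_0^{(j)}/X_0^{(j)}}$.

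To finish, the factorization $x_1 \circ g = y_1 \cdot h$ identifies $g^*X_0^{(j)} = (x_1 \circ g = 0) = Y_0^{(j)} + (h = 0)$ as divisors near $q$, whence $(h = 0) = g^*X_0^{(j)} - Y_0^{(j)}$ locally, and rearranging yields the desired identity. The main subtle point is the verification that $(h = 0)$ has no $Y_0^{(j)}$-component, which is precisely the order-one vanishing argument relying on the birationality of $g_0^{(j)}$ established in the previous discussion; the remaining ingredients are a routine block-matrix computation extending that of Lemma \ref{compute cpnfn}. As an independent sanity check, the same identity follows abstractly from adjunction: applying $K_D = (K_Z + D)|_D$ to both $Y_0^{(j)} \subset Y$ and $X_0^{(j)} \subset X$ and using compatibility of pullback with restriction for Cartier divisors whose support avoids $X_0^{(j)}$ reproduces exactly the formula.
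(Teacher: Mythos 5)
Your main argument is correct and is essentially the paper's approach: a local Jacobian computation via Laplace expansion relating ${\rm Jac}\,g$ restricted to $Y_0^{(j)}$ to ${\rm Jac}\,g_0^{(j)}$. The differences are in the bookkeeping: you normalize the coordinates on $X$ so that $x_1$ itself is a defining equation for $X_0^{(j)}$ and keep $h$ (with $x_1\circ g=y_1h$) as an abstract holomorphic factor, whereas the paper keeps a general defining function $\pi_1$, writes $h$ explicitly as the SNC monomial $\prod y_{n-i}^{h_{l-i+1}}$, and introduces the auxiliary $(n-1)$-form $\omega$ to run the Laplace expansion along the last column rather than the first row. Your choice streamlines the computation, but the decomposition $g^*X_0^{(j)}=Y_0^{(j)}+(h=0)$ is exactly the paper's $g^*X_0^{(1)}=Y_0^{(1)}+\sum h_iY_0^{(i)}$, so the underlying identity is the same.

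One point worth tightening: you say that bimeromorphicity of $g_0^{(j)}$ "forces ${\rm ord}_{Y_0^{(j)}}(x_1\circ g)=1$." What is actually needed is that $Y_0^{(j)}$ is the strict transform of $X_0^{(j)}$ and that $g$ is a local isomorphism at the generic point of $Y_0^{(j)}$; the latter follows because $g(Ex(g))$ has codimension at least $2$ in the smooth $X$ (as the paper notes in Lemma \ref{log resolution}), so $X_0^{(j)}\not\subset g(Ex(g))$. Bimeromorphicity of the restriction $g_0^{(j)}$ alone does not immediately control the normal direction; in particular, it also does not by itself rule out $Y_0^{(j)}\subset Ex(g)$, which you need in order to equate the divisorial restriction $K_{Y/X}|_{Y_0^{(j)}}$ with the divisor of the restricted function ${\rm Jac}\,g|_{y_1=0}$. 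Your factorization ${\rm Jac}\,g|_{y_1=0}=h(0,\cdot)\cdot{\rm Jac}\,g_0^{(j)}$ then closes this gap, since the right side is visibly not identically zero once $h\not\equiv0$ is secured.

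Your closing adjunction remark is a genuinely different and more conceptual route not taken by the paper: applying $K_D=(K_Z+D)|_D$ to $Y_0^{(j)}\subset Y$ and $X_0^{(j)}\subset X$ and using compatibility of pullback and restriction gives the identity in two lines, at the cost of invoking adjunction and the commutation of $g^*$ with $|_{Y_0^{(j)}}$ rather than verifying everything by hand. It is a worthwhile cross-check, and arguably the cleaner argument if one is comfortable citing those facts in the analytic setting.
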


\begin{proof} With no lose of generality, we only
need to prove the lemma for $j=1$, i.e.
\begin{displaymath}
K_{Y_0^{(1)}/X_0^{(1)}}=K_{Y/X}\big|_{Y_0^{(1)}}+\left(Y_0^{(1)}-g^*X_0^{(1)}\right)\big|_{Y_0^{(1)}}.
\end{displaymath}

Assume
\begin{displaymath}
g^*(X_0^{(1)})= Y_0^{(1)}+\sum_{i=k+1}^l h_i Y_0^{(i)},
\end{displaymath}
here $h_i$ be a nonnegative integer. For any fixed $q\in Y_0^{(1)}$, assume without lose of generality that $q\notin Y_0^{(k+1)},\cdots, Y_0^{(l-s)}$, and $q\in Y_0^{(l-s+1)},\cdots, Y_0^{(l)}$ where $s\leq l-k$ is a nonnegative integer.

Since $Y_0$ is simple-normal-crossing, we may choose local coordinates $(y_1,\cdots,y_n)$ of $Y$ near $q$ such that $Y_0^{(l-i+1)}=(y_{n-i}=0)$ for any $1\leq i\leq s$ and $Y_0^{(1)}= (y_n=0)$. We may also choose local coordinates $(x_1,\cdots,x_n)$ of $X$ near $p$, and a holomorphic function $\pi_1$ near $p$ such that $X_0^{(1)}=(\pi_1=0)$. So $(y_1,\cdots, y_{n-1})$ gives a local coordinate of $Y_0^{(1)}$.

For simplicity, Denote $\widetilde{\pi}_1:=\pi_1\circ g$. Let $A$ be the matrix $\left(\frac{\partial(x_1,\cdots,x_n)}{\partial(y_1,\cdots,y_n)}\right)$, and let $B_i$ be the matrix $\left(\frac{\partial(x_1,\cdots,\widehat{x_i},\cdots,x_n)}{\partial(y_1,\cdots,y_{n-1})}\right)$.

By the formula above, in a sufficiently small neighborhood $U_q$ of $q$ we have
\begin{displaymath}
g^*(X_0^{(1)})\big|_{U_q}= Y_0^{(1)}\big|_{U_q}+\sum_{i=1}^s h_{l-i+1} Y_0^{(l-i+1)}\big|_{U_q},
\end{displaymath}
so we may choose a proper $\pi_1$ such that
\begin{displaymath}
\widetilde{\pi}_1(y_1,\cdots, y_n)=y_n\cdot\prod_{i=1}^s y_{n-i}^{h_{l-i+1}}.
\end{displaymath}

For any $1\leq i\leq n$, define $\omega$ to be a $(n-1)$-form on $(X_0^{(1)})_{\textrm{reg}}$, the regular set of $X_0^{(1)}$, as
\begin{displaymath}
\omega=(-1)^i\left(\frac{\partial \pi_1}{\partial x_i}\right)^{-1}dx_1\wedge\cdots\wedge\widehat{dx_i}\wedge\cdots \wedge dx_n.
\end{displaymath}

It is easy to check $\omega$ is well-defined, and it has no zeroes or poles near $p$ in $X_0^{(1)}$.

Hence we have
\begin{displaymath}
\left(g_0^{(1)}\right)^*\omega={\rm Jac}\, g_0^{(1)}dy_1\wedge\cdots \wedge dy_{n-1}.
\end{displaymath}

As a result, for any $1\leq i\leq n$, we have
\begin{displaymath}
{\rm Jac}\, g_0^{(1)}=(-1)^i \left(\frac{\partial\pi_1}{\partial x_i}\right)^{-1}\cdot\det B_i,
\end{displaymath}
which is equivalent to
\begin{displaymath}
\det B_i=(-1)^i\frac{\partial \pi_1}{\partial x_i}\cdot {\rm Jac}\, g_0^{(1)}.
\end{displaymath}

The Laplace expansion along the $n$-th column of $A$ yields:
\begin{align*}
{\rm Jac}\,g & =\det A\\
 & =\sum_{i=1}^n (-1)^{n-i}\frac{\partial x_i}{\partial y_n}\cdot \det B_i\\
 & =\sum_{i=1}^n (-1)^{n-i}\frac{\partial x_i}{\partial y_n}\cdot (-1)^i\frac{\partial \pi_1}{\partial x_i}\cdot {\rm Jac}\, g_0^{(1)}\\
 & =(-1)^n {\rm Jac}\, g_0^{(1)}\sum_{i=1}^n \frac{\partial \pi_1}{\partial x_i}\cdot\frac{\partial x_i}{\partial y_n}\\
 & =(-1)^n {\rm Jac}\, g_0^{(1)}\cdot\frac{\partial \widetilde{\pi}_1}{\partial y_n}\\
 & =(-1)^n {\rm Jac}\, g_0^{(1)}\cdot\frac{\partial}{\partial y_n}\left(y_n\cdot\prod_{i=1}^s y_{n-i}^{h_{l-i+1}}\right)\\
 & =(-1)^n {\rm Jac}\, g_0^{(1)}\cdot\prod_{i=1}^s y_{n-i}^{h_{l-i+1}}.
\end{align*}

Hence we have
\begin{align*}
K_{Y/X}\big|_{Y_0^{(1)}} & = ({\rm Jac}\,g=0)\big|_{Y_0^{(1)}}\\
 & = ({\rm Jac}\, g_0^{(1)}=0) + \sum_{i=1}^s h_{l-i+1}\cdot(y_{n-i}=0)\big|_{Y_0^{(1)}}\\
 & = K_{Y_0^{(1)}/X_0^{(1)}} + \sum_{i=l-s+1}^l h_i Y_0^{(i)}\big|_{Y_0^{(1)}}.
\end{align*}

Since the equation above is near $q$, we have in general
\begin{align*}
K_{Y/X}\big|_{Y_0^{(1)}} & = K_{Y_0^{(1)}/X_0^{(1)}} + \sum_{i=k+1}^l h_i Y_0^{(i)}\\
& = K_{Y_0^{(1)}/X_0^{(1)}} + \left(g^*(X_0^{(1)}) - Y_0^{(1)}\right)\big|_{Y_0^{(1)}}.
\end{align*}
which is equivalent to
\begin{displaymath}
K_{Y_0^{(1)}/X_0^{(1)}}=K_{Y/X}\big|_{Y_0^{(1)}}+\left(Y_0^{(1)}-g^*X_0^{(1)}\right)\big|_{Y_0^{(1)}}.
\end{displaymath}

Hence we prove the lemma.
\end{proof}

Based on these lemmas, to finish the proof of theorem \ref{main thm 1} we need only to compute $c_{p_0}(f_0^{(j)})$. This will be done in the following proof:

\begin{proof}[Proof of Theorem \ref{main thm 1}]
For every $1\leq j\leq k$, since $Y_0^{(j)}$ is a component of effective divisor
$g^*X_0^{(j)}$, so we have $Y_0^{(j)}- g^*X_0^{(j)}\leq 0$, hence by Lemma \ref{compute ky0x0} we have
\begin{displaymath}
K_{Y_0^{(j)}/X_0^{(j)}}\leq K_{Y/X}\big|_{Y_0^{(j)}}= \sum_{i=1}^m
k_i (E_i)_0^{(j)}.
\end{displaymath}

Suppose $p_0\in X_0^{(j)}$, then Proposition \ref{compute CSE} implies
\begin{align*}
c_{p_0}(f_0^{(j)}) & = c_{p_0}(F|_{Y_0^{(j)}})\\
 & \leq \min_{i: p_0\in g(E_i)}\frac{k_i+1}{a_i}.
\end{align*}

Thus, to prove Theorem \ref{main thm 1}, we need only to show

\begin{displaymath}
\min_{p_0\in g(E_i)}\frac{k_i+1}{a_i}\leq
\varliminf_{t\rightarrow 0} c_{p_t}(f_t).
\end{displaymath}

Since $c_{p_t}(f_t)$ belongs to the finite set $\{\frac{k_i+1}{a_i}|
1\leq i \leq m\}$, there exists $1\leq m_0\leq m$ and $t_n\rightarrow 0$ such that

\begin{displaymath}
\varliminf_{t\rightarrow 0}
c_{p_t}(f_t)=c_{p_{t_n}}(f_{t_n})=\frac{k_{m_0}+1}{a_{m_0}},
\end{displaymath}
and for any $n$ we have $p_{t_n}$ belongs to $g(E_{m_0})$.

Since $\lim_{n\rightarrow\infty}p_{t_n}=p_0$ and $g(E_{m_0})$ is closed, $p_0$ belongs to $g(E_{m_0})$. Hence

\begin{displaymath}
\min_{p_0\in g(E_i)}\frac{k_i+1}{a_i}\leq
\frac{k_{m_0}+1}{a_{m_0}}= \varliminf_{t\rightarrow 0}
c_{p_t}(f_t).
\end{displaymath}

Hence we prove Theorem \ref{main thm 1}.
\end{proof}

\subsection{Complete proof of the main theorem}\label{sect 3.2}

In the last subsection we proved the semi-continuity for complex singularity exponent when only simple-normal-crossing singularities occur in the central fiber. In this section, we will generalize this result to include all type of singularities occur in $X_0$.

In this subsection, for two continuous functions $f,g$ with non-negative values, the notation $f\succeq g$ represents that for two  if there exists $M>0$ such that $M\cdot f\geq g$; the notation $f\approx g$ represents that both $f\succeq g$ and $g\succeq f$.

We now restate the main theorem:

\begin{thm}
Suppose $F: X\rightarrow
\mathbb{C}$ is a holomorphic function, $X_0^{(i)}$ is any irreducible component of $X_0$ containing $p_0$. Denote $f_t=F|_{X_t}$, $f_0^{(i)}=F|_{X_0^{(i)}}$, then the inequality
\begin{displaymath}
c_{p_0}(f_0^{(i)})\leq \varliminf_{t\rightarrow 0} c_{p_t}(f_t)
\end{displaymath}
always holds.
\end{thm}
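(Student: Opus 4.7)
The plan is to reduce the general main theorem to its SNC version (Theorem \ref{main thm 1}) via the fibration Hironaka theorem, and to use the ``pull back the integral'' idea indicated in the introduction. The bulk of the argument retraces the proof of Theorem \ref{main thm 1}: the $t \neq 0$ side is untouched, while the computation of $c_{p_0}(f_0^{(i)})$ is redone in the presence of singularities of $X_0^{(i)}$ at $p_0$.

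First I would apply Theorem \ref{Hironaka} to the divisor $V(F) + X_0 \subset X$ to obtain a proper bimeromorphic $g : Y \to X$ from a complex manifold $Y$ such that $Ex(g) + g^{*}V(F) + g^{*}X_0$ is simple-normal-crossing. Writing $\widetilde{V} = \sum a_\ell E_\ell$ and $K_{Y/X} = \sum k_\ell E_\ell$, Lemma \ref{log resolution} and Lemma \ref{compute cpnfn} apply unchanged (their proofs only use the SNC structure of $Y_t$ for $t \neq 0$ and of the divisor on $Y$, not of $X_0$), so for small $t \neq 0$,
\[
c_{p_t}(f_t) = \min_{\ell : p_t \in g(E_\ell)} \frac{k_\ell + 1}{a_\ell}.
\]
Let $Y_0 = \bigcup_j Y_0^{(j)}$ be the SNC decomposition and $Y_0^{(i)}$ the unique strict transform of the given component $X_0^{(i)}$; then $g_0^{(i)} : Y_0^{(i)} \to X_0^{(i)}$ is a proper bimeromorphic resolution of singularities.

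The key new step is the bound
\[
c_{p_0}(f_0^{(i)}) \;\le\; \min_{q_0 \in (g_0^{(i)})^{-1}(p_0)} \ \min_{\ell : q_0 \in E_\ell} \frac{k_\ell + 1}{a_\ell}.
\]
Since $g_0^{(i)}$ is an isomorphism off a lower-dimensional analytic subset, change of variables gives
\[
\int_U |f_0^{(i)}|^{-2c} \, dV_{X_0^{(i)}} = \int_{(g_0^{(i)})^{-1}(U)} |F \circ g_0^{(i)}|^{-2c} \cdot (g_0^{(i)})^{*} dV_{X_0^{(i)}}.
\]
Near $q_0 \in Y_0^{(i)}$ I would choose SNC coordinates $(y_1, \ldots, y_n)$ on $Y$ with $Y_0^{(i)} = (y_n = 0)$ and the remaining components of $g^{*}X_0$ through $q_0$ given by $(y_{n-j} = 0)$ for $1 \le j \le s$, together with a local defining equation $\pi_i$ of $X_0^{(i)}$ at $p_0$. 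A Laplace expansion of $\mathrm{Jac}(g)$ along the fiber direction, in the spirit of Lemma \ref{compute ky0x0} but without assuming smoothness of $X_0^{(i)}$, should yield a local equivalence
\[
(g_0^{(i)})^{*} dV_{X_0^{(i)}} \approx \prod_{\ell} |y_\ell|^{2 e_\ell}\, dV_{Y_0^{(i)}}, \qquad e_\ell = k_\ell - h_\ell,
\]
where $h_\ell \ge 0$ is the multiplicity of $E_\ell$ in $g^{*}X_0^{(i)} - Y_0^{(i)}$; this generalizes the identity in Lemma \ref{compute ky0x0}. Integrability of $|F \circ g_0^{(i)}|^{-2c} \prod |y_\ell|^{2 e_\ell}$ near $q_0$ then amounts to $c < (e_\ell + 1)/a_\ell$ for each $\ell$ with $q_0 \in E_\ell$, and $h_\ell \ge 0$ forces $(e_\ell+1)/a_\ell \le (k_\ell+1)/a_\ell$, giving the displayed inequality.

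With this bound in hand, the endgame is identical to the last few lines of the proof of Theorem \ref{main thm 1}. Pick $t_n \to 0$ with $c_{p_{t_n}}(f_{t_n}) \to \varliminf_{t \to 0} c_{p_t}(f_t)$; by finiteness of $\{(k_\ell+1)/a_\ell\}$ some $\ell_0$ satisfies $p_{t_n} \in g(E_{\ell_0})$ and $c_{p_{t_n}}(f_{t_n}) = (k_{\ell_0}+1)/a_{\ell_0}$, and closedness of $g(E_{\ell_0})$ forces $p_0 \in g(E_{\ell_0})$. Lifting through $g_0^{(i)}$ produces $q_0 \in E_{\ell_0} \cap Y_0^{(i)}$ over $p_0$, whence $c_{p_0}(f_0^{(i)}) \le (k_{\ell_0}+1)/a_{\ell_0} = \varliminf_{t \to 0} c_{p_t}(f_t)$.

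The main obstacle is the local Jacobian estimate in the singular case: one must control the pullback of $dV_{X_0^{(i)}}$ --- a form defined only on the smooth locus of $X_0^{(i)}$, with possibly subtle asymptotics toward the singular locus --- in terms of explicit SNC data, and identify the exponents $e_\ell$ with $k_\ell - h_\ell$. The approach is to compare $dV_{X_0^{(i)}}$ with the Poincar\'e residue of $dV_X$ along a local defining equation $\pi_i$ for $X_0^{(i)}$ at $p_0$, and expand the resulting Jacobian determinant using the SNC factorization $\widetilde{\pi} = u \cdot y_n \cdot \prod_{j=1}^{s} y_{n-j}^{h_{n-j}}$ on $Y$, reducing everything to the determinantal identity already carried out in Lemma \ref{compute ky0x0}.
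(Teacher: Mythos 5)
There is a genuine gap, and it sits exactly where you flag the ``main obstacle.'' The crux of your argument is the claimed local estimate
\[
(g_0^{(i)})^{*}\, dV_{X_0^{(i)}} \;\approx\; \prod_{\ell} |y_\ell|^{2 e_\ell}\, dV_{Y_0^{(i)}}, \qquad e_\ell = k_\ell - h_\ell,
\]
asserted to follow from a Laplace expansion ``in the spirit of Lemma \ref{compute ky0x0} but without assuming smoothness of $X_0^{(i)}$.'' That is precisely the step that fails. The proof of Lemma \ref{compute ky0x0} runs through the Poincar\'e residue form $\omega$ on $X_0^{(1)}$, and it explicitly uses that $\omega$ is a holomorphic, nowhere-vanishing $(n-1)$-form near $p_0$, which forces $X_0^{(1)}$ to be smooth there. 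When $X_0^{(i)}$ is singular at $p_0$, the induced volume form $dV_{X_0^{(i)}}$ and the residue form do \emph{not} differ by a unit: on the smooth locus of a hypersurface $\{\pi_i=0\}$ one has $dV_{X_0^{(i)}}=|\nabla\pi_i|^2\cdot\bigl(\tfrac{\sqrt{-1}}{2}\bigr)^{n-1}\omega\wedge\bar\omega$, and the factor $|\nabla\pi_i|^2$ degenerates along the singular set. So the Laplace expansion relates $\mathrm{Jac}\,g$ to the residue, not to $dV_{X_0^{(i)}}$, and the extra vanishing of $|\nabla\pi_i|^2$ contributes further exponents on the exceptional divisors that your formula omits. (Also, $K_{Y_0^{(i)}/X_0^{(i)}}$ need not be a well-defined $\mathbb{Q}$-divisor if $X_0^{(i)}$ is not $\mathbb{Q}$-Gorenstein, so the identity you want to ``generalize'' does not literally make sense in the singular case.)

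The paper's proof is built exactly to get around this. It inserts a preliminary resolution $h:(Z,Z_0)\to(X,X_0)$ with $Ex(h)\subset Z_0$, pulls the integral back to $Z_0^{(i)}$, and computes $\tfrac{h^*dV_{X_0^{(i)}}}{dV_{Z_0^{(i)}}}\approx\sum_j|H_j|^2$ with the $H_j$ Jacobian minors. This sum of squares is \emph{not} a single monomial and is not two-sidedly comparable to any power of $|\mathrm{Jac}\,h|$; only a one-sided bound is available, obtained via Hilbert's Nullstellensatz: $(\mathrm{Jac}\,h)^N\in(H_1,\dots,H_n)$ for some possibly large $N$, giving $\sum_j|H_j|^2\succeq|\mathrm{Jac}\,h|^{2N}$. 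That exponent $N$ then propagates through the divisor bookkeeping as $N\cdot g^*K_{Z/X}$, producing coefficients $k_\ell+l_\ell$ rather than your $k_\ell-h_\ell$, and the argument closes because $l_\ell$ is supported in $Y_0$ (since $\mathrm{supp}\,K_{Z/X}\subset Ex(h)\subset Z_0$) and so vanishes on the divisor $E_{m_0}$ realizing $\varliminf_t c_{p_t}(f_t)$. Your endgame, and your correct observation that only divisors not contained in $Y_0$ matter, match the paper; but without the $\sum|H_j|^2$ comparison and the Nullstellensatz step you have no valid upper bound on $c_{p_0}(f_0^{(i)})$, and the key inequality $c_{p_0}(f_0^{(i)})\le(k_{m_0}+1)/a_{m_0}$ is not established.
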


\begin{proof}
By Hironaka resolution theorem (\cite{hir64}), there exists a log resolution $h:(Z,Z_0)\rightarrow (X,X_0)$ such that $Z$ is smooth, $Ex(h)\subset Z_0$ and $Z_0$ is simple normal crossing. For simplicity, denote $\overline{F}:=F\circ h$, $\overline{\pi}:=\pi\circ h$, $V:=V(F)$ and $\overline{V}:=h^{*}V$.
Without loss of generality, suppose $i=1$ and $h_0^{(1)}:Z_0^{(1)}\rightarrow X_0^{(1)}$ is a bimeromorphic morphism. Since $Ex(h)\subset Z_0$, $h:Z\setminus Z_0\rightarrow X\setminus X_0$ is an isomorphism. For any $t\in \Delta-\{0\}$, denote $\overline{p}_t:=h^{-1}(p_t)$.

For any $q\in Z_0^{(1)}$, choose local coordinates of $q$ and $h(q)$, such that $h(z_1,\cdots,z_n)=(h_1,\cdots, h_n)$ and $Z_0^{(1)}=(z_1=0)$. When $z_1=0$, simple calculations yield
\begin{displaymath}
\frac{h^*dV_{X_0^{(1)}}}{dV_{Z_0^{(1)}}}\approx \sum_{i=2}^n\left|\det\left(\frac{\partial(h_1,\cdots,\hat{h_i},\cdots, h_n)}{\partial(z_2,\cdots,z_n)}\right)\right|^2=\sum_{i=1}^n |H_i|^2,
\end{displaymath}
where $H_i:=\det\left(\frac{\partial(h_1,\cdots,\hat{h_i},\cdots, h_n)}{\partial(z_2,\cdots,z_n)}\right)$.

By Laplace expansion, ${\rm Jac}\,h=\sum_{i=1}^m (-1)^{i+1}\frac{\partial h_i}{\partial z_1}\cdot H_i$, thus $V(H_1,\cdots,H_n)\subset V({\rm Jac}\, h)$. Hilbert's Nullstellensatz yields ${\rm Jac}\,h\in \sqrt{(H_1,\cdots,H_n)}$, so there exists $N\in\mathbb{N}$ and holomorphic functions $\alpha_1,\cdots,\alpha_n$ such that
\begin{displaymath}
({\rm Jac}\,h)^N=\alpha_1 H_1+\cdots+\alpha_n H_n.
\end{displaymath}

Pick $M\geq \sup(\sum_{i=1}^n |\alpha_i|^2)$, then we have
\begin{align*}
\sum_{i=1}^n |H_i|^2 & \geq \frac{1}{M}\left(\sum_{i=1}^n |H_i|^2\right)\left(\sum_{i=1}^n |\alpha_i|^2\right)\\
& \geq \frac{1}{M}\left|\sum_{i=1}^n \alpha_i H_i\right|^2 = \frac{1}{M}|{\rm Jac}\,h|^{2N}.
\end{align*}

As a result, $h^*dV_{X_0^{(1)}}\succeq |{\rm Jac}\,h|^{2N}dV_{Z_0^{(1)}}$, so for any small open neighborhood $U_0^{(1)}$ of $p_0\in X_0^{(1)}$,
\begin{equation}\label{int ineq 1}
\int_{U_0^{(1)}}\frac{dV_{X_0^{(1)}}}{|F|^{2c}}=\int_{\overline{U}_0^{(1)}}\frac{h^*dV_{X_0^{(1)}}}{|\overline{F}|^{2c}}\succeq \int_{\overline{U}_0^{(1)}}\frac{|{\rm Jac}\,h|^{2N}dV_{Z_0^{(1)}}}{|\overline{F}|^{2c}},
\end{equation}
where $\overline{U}_0^{(1)}:=h^{-1}(U_0^{(1)})$ is a small open neighborhood of $h^{-1}(p_0)$.

By Theorem \ref{Hironaka}, we may choose a complex manifold $Y$ and a bimeromorphic
morphism $g:Y\rightarrow Z$ such that $Ex(g)+g^*\overline{V}+(\overline{\pi}\circ g)^*(t)$
is simple-normal-crossing for every $t\in \Delta$. For simplicity, denote
$\widetilde{\pi}=\overline{\pi}\circ g$, $Y_t=\widetilde{\pi}^{-1}(t)$,
$\widetilde{V}=g^*\overline{V}$, $g_t=g|_{Y_t}: Y_t\rightarrow Z_t$. Without loss of generality,
assume $g_0^{(1)}:Y_0^{(1)}\rightarrow Z_0^{(1)}$ is a bimeromorphic
morphism.

Pulling back (\ref{int ineq 1}) on $Y_0^{(1)}$ yields:
\begin{equation}\label{int ineq 2}
\int_{U_0^{(1)}}\frac{dV_{X_0^{(1)}}}{|F|^{2c}}\succeq
\int_{\widetilde{U}_0^{(1)}}\frac{|{\rm Jac}\, g_0^{(1)}\cdot({\rm Jac}\,h)^N\circ g|^2dV_{Y_0^{(1)}}}{|\widetilde{F}|^{2c}},
\end{equation}
where $\widetilde{U}_0^{(1)}:=(g\circ h)^{-1}(U_0^{(1)})$.

Then
\begin{align*}
K_{Y_0^{(1)}/Z_0^{(1)}} & =({\rm Jac}\,g_0^{(1)}=0)\\
& =K_{Y/Z}|_{Y_0^{(1)}}+\left(Y_0^{(1)}-g^*Z_0^{(1)}\right)\big|_{Y_0^{(1)}}.\\
N\cdot g^*K_{Z/X}& =\left(({\rm Jac}\,h)^N\circ g=0\right),
\end{align*}

By assumption, $V({\rm Jac}\, h)=K_{Z/X}$ satisfies ${\rm supp}(K_{Z/X})\subset Ex(h)\subset Z_0$, so ${\rm supp}(V({\rm Jac}\, h))\subset Z_0$, which means ${\rm supp}(V(({\rm Jac}\, h)^N\circ g))\subset Y_0$.

Suppose $\widetilde{V}=\sum_{i=1}^m a_i E_i$, $K_{Y/Z}=\sum_{i=1}^m k_i E_i$, $Y_0^{(1)}-g^*Z_0^{(1)}+N\cdot g^*K_{Z/X}=\sum_{i=1}^m l_i E_i$, where $E_i$ are different prime divisors on $Y$. Without loss of generality, suppose $Y_0$ contains $E_1,\cdots, E_s$ but not contains $E_{s+1},\cdots, E_m$. Thus $l_i=0$ for any $i\geq s+1$, since ${\rm supp}(Y_0^{(1)}-g^*Z_0^{(1)}+N\cdot g^*K_{Z/X})\subset Y_0$.

Therefore,
\begin{eqnarray}\label{VJac}
V\left({\rm Jac}\, g_0^{(1)}\cdot({\rm Jac}\,h)^N\circ g\right) & = & V\left({\rm Jac}\, g_0^{(1)}\right)+V\left(({\rm Jac}\,h)^N\circ g\right)\nonumber\\
& = &\sum_{i=1}^m (k_i+l_i) E_i\big|_{Y_0^{(1)}}.
\end{eqnarray}

While
\begin{equation}\label{VF}
\widetilde{V}\big|_{Y_0^{(1)}}=V(\widetilde{F}\big|_{Y_0^{(1)}})=\sum_{i=1}^m a_i E_i\big|_{Y_0^{(1)}}.
\end{equation}

Combining (\ref{int ineq 2}), (\ref{VJac}) and (\ref{VF}) together, we have
\begin{align*}
c_{p_0}(f_0^{(1)})&\leq \sup\left\{c: \exists U_0^{(1)}\textrm{ s.t. }\int_{\widetilde{U}_0^{(1)}}\frac{|{\rm Jac}\, g_0^{(1)}\cdot({\rm Jac}\,h)^N\circ g|^2dV_{Y_0^{(1)}}}{|\widetilde{F}|^{2c}}<+\infty\right\}\\
& = \min_{i: h^{-1}(p_0)\cap g(E_i)\neq \emptyset}\frac{k_i+l_i+1}{a_i}.
\end{align*}

On the other hand, Lemma \ref{compute cpnfn} implies $c_{p_t}(f_t)=c_{\overline{p}_t}(\overline{F}|_{Z_t})=\min_{i: \overline{p}_t\in g(E_i)}\frac{k_i+1}{a_i}$.

Since $c_{p_t}(f_t)$ belongs to the finite set $\{\frac{k_i+1}{a_i}|
1\leq i \leq m\}$, there exists $1\leq m_0\leq m$ and $t_n\rightarrow 0$ with $t_n\neq 0$ such that

\begin{displaymath}
\varliminf_{t\rightarrow 0}
c_{p_t}(f_t)=c_{p_{t_n}}(f_{t_n})=\frac{k_{m_0}+1}{a_{m_0}},
\end{displaymath}
and for any $n$ we have $\overline{p}_{t_n}$ belongs to $E_{m_0}$. Since $t_n\neq 0$, $\overline{p}_{t_n}\not\in Y_0$, so $E_{m_0}\not\subset Y_0$, i.e. $m_0\geq s+1$.

Since $h^{-1}(p_0)$ is compact, there exists $\overline{p}_0\in h^{-1}(p_0)$ such that a subsequence of $\overline{p}_{t_n}$ converges to $\overline{p}_0$. Since $g(E_{m_0})$ is closed and contains every $\overline{p}_{t_n}$, it must also contains $\overline{p}_0$, which means $h^{-1}(p_0)\cap g(E_{m_0})\neq\emptyset$.

Therefore, $c_{p_0}(f_0^{(1)})\leq \frac{k_{m_0}+l_{m_0}+1}{a_{m_0}}$. Notice that $m_0\geq s+1$, so $l_{m_0}=0$, which means
\[
c_{p_0}(f_0^{(1)})\leq \frac{k_{m_0}+1}{a_{m_0}}=\varliminf_{t\rightarrow 0}
c_{p_t}(f_t).
\]

Hence we prove the main theorem.
\end{proof}

\subsection{Proof of the stronger form of the main theorem}\label{sect 3.3}

The key to the proof of Theorem \ref{stronger main thm} is the ACC for the log canonical threshold, where ACC stands for the ascending chain condition. The ACC for the log canonical threshold was conjectured by Shokurov in \cite{sho92}. When the dimension is three, Koll\'{a}r in \cite{kol94} proves that $1$ is not an accumulation point from below and the ACC for the log canonical threshold follows from the results of Alexeev in \cite{ale94}. Recently, the ACC Conjecture was proved by T. de Fernex, L. Ein, and M. Musta\c{t}\u{a} for complete intersections in \cite{dflm09} and even when $X$ belongs to a bounded family in \cite{dflm11}. More recently, it was proved for arbitrary varieties by C. Hacon, J. McKernan and C. Xu in \cite{hmx12}.

\begin{thm}[\cite{dflm09}, also \cite{dflm11}, \cite{hmx12}]\label{acc}
For every $n$, the set $\mathcal{T}_n$ of all complex singularity exponents $c_p(f)$, where
$f$ is a holomorphic function on an $n$-dimensional complex manifold $X$ and
$p\in V(f)$, satisfies ACC.
\end{thm}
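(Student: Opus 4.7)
The plan is to translate the analytic statement into algebraic geometry and then argue by induction on dimension, appealing to the minimal model program machinery. By Proposition \ref{analytic LCT}, $c_p(f) = \text{lct}_p(f)$ whenever $X$ is smooth, so $\mathcal{T}_n$ coincides with the set of log canonical thresholds of pairs $(X, V(f))$ at closed points of smooth $n$-dimensional varieties. Since $c_p(f)$ depends only on the ideal generated by $f$ in the local ring $\mathcal{O}_{X,p}$, and any analytic germ can be computed via an algebraic representative using a log resolution and Proposition \ref{compute CSE}, the question reduces to the purely algebraic one: does the set of lct's of smooth $n$-dimensional pairs satisfy ACC?

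The natural approach is induction on $n$. The base case $n = 1$ is immediate from the first example in Section \ref{sect 2}: $\mathcal{T}_1 = \{0\} \cup \{1/k : k \in \mathbb{Z}_{\geq 1}\}$, which trivially satisfies ACC. For the inductive step, I would suppose for contradiction that a strictly increasing sequence $c_1 < c_2 < \cdots$ exists in $\mathcal{T}_n$ with supremum $c \leq 1$, realized as $c_i = \text{lct}_{p_i}(X_i, D_i)$ for some smooth pairs.

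The heart of the argument would be an adjunction strategy. For each $i$, take a dlt modification $\mu_i: Y_i \to X_i$ of $(X_i, c_i D_i)$; since this pair is log canonical but fails to be klt at $p_i$, it has a minimal log canonical center $S_i$ on $Y_i$, and the Kawamata subadjunction formula produces a lower-dimensional log canonical pair $(S_i, B_i)$ whose boundary coefficients encode $c_i$ together with the combinatorics of $\mu_i$. Applying the inductive hypothesis to this $(n-1)$-dimensional pair should then force the sequence $\{c_i\}$ into an ACC set, contradicting strict monotonicity.

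The principal obstacle, and the reason this deep result cannot be proved by an elementary induction, is that the coefficients appearing in the adjunction boundary $B_i$ need not themselves lie in a DCC set even when the $c_i$ do; controlling them essentially requires a companion ACC statement for minimal log discrepancies and for coefficients of the different, which is roughly as hard as the original problem. The actual proofs in \cite{dflm09}, \cite{dflm11}, and \cite{hmx12} overcome this by combining the inductive adjunction scheme with the global ACC for numerically trivial log pairs and the boundedness of $\epsilon$-lc Fano varieties (BAB-type results), ultimately packaging the $(X_i, D_i)$ into a bounded family on which lct takes only finitely many values near any accumulation point. For the analytic statement here the ambient $X$ is automatically smooth, which is precisely the case already treated in \cite{dflm09} and \cite{dflm11} prior to the full-generality resolution in \cite{hmx12}.
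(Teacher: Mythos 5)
The first thing to say is that the paper does not prove Theorem \ref{acc} at all: it is quoted as a black box from \cite{dflm09}, \cite{dflm11} and \cite{hmx12}, and the only original content in the paper surrounding it is its use in Subsection \ref{sect 3.3}. So there is no ``paper proof'' to match your argument against, and your write-up should be judged as a standalone proof attempt.

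As such, it has a genuine gap: it is a roadmap, not a proof. The two steps that carry all the difficulty are exactly the ones you defer. First, the reduction from analytic germs to an algebraic ACC statement is plausible but not actually carried out; to make it rigorous you need something like finite determinacy of the complex singularity exponent (if $f$ and a polynomial truncation $g$ agree to sufficiently high order at $p$ then $c_p(f)=c_p(g)$), or an argument that the numerical data $(k_i,a_i)$ of a log resolution as in Proposition \ref{compute CSE} can be realized by an algebraic pair, so that $\mathcal{T}_n$ is contained in the set of lct's of polynomials in $n$ variables. Saying that $c_p(f)$ ``depends only on the ideal in $\mathcal{O}_{X,p}$'' does not by itself produce an algebraic representative. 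Second, the inductive adjunction scheme you sketch (dlt modification, minimal lc center, Kawamata subadjunction) fails for precisely the reason you yourself identify: the coefficients of the different are not a priori in a DCC set, and closing that loop requires the global ACC for numerically trivial pairs and boundedness input of \cite{hmx12} --- results far deeper than anything available in this paper. (As a side remark, your description of the actual proofs is also slightly off for the smooth case: \cite{dflm09} and \cite{dflm11} do not run the MMP/adjunction induction but argue via generic limits of ideals.) Since every essential ingredient is ultimately ``see the cited papers,'' your proposal amounts to the same citation the paper makes, dressed as an argument; it should not be presented as a proof of Theorem \ref{acc}.
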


\begin{proof}[Proof of Theorem \ref{stronger main thm}]
Suppose the conclusion is false, then we can choose a sequence $t_k\in\Delta$ with $\lim_{k\rightarrow\infty} t_k=0$, such that $c_{p_{t_k}}(f_{t_k})<c_{p_0}(f_0)$. By Theorem \ref{main thm}, we have
\begin{displaymath}
\varliminf_{k\rightarrow \infty}c_{p_{t_k}}(f_{t_k})\geq c_{p_0}(f_0).
\end{displaymath}
Thus we can choose a subsequence $\{s_l\}$ of $\{t_k\}$ such that $\{c_{p_{s_l}}(f_{s_l})\}$ is a strictly increasing sequence, which contradicts Theorem \ref{acc}.
\end{proof}

\section{Stability of integrals along fibers}\label{sect 4}

Recall that Theorem \ref{semicont smooth} not only proves the lower semi-continuity property of complex singularity exponents, but also shows the stability of integrals with respect to continuous parameters.
In this section, we present the proof of Theorem \ref{eff irreducible}, which shows the stability of integrals along fibers in some 2-dimensional cases.\\

We fix some notations throughout this section. Suppose $X=D_0(R_0,R_0)$, $\Delta=B_0(R_0^2)$ where $R_0>0$. Here $D_{(p_1,p_2)}(R_1,R_2):=\{(z,w)\in\mathbb{C}^2 : |z-p_1|<R_1, |w-p_2|<R_2\}$ is an open polydisc in $\mathbb{C}^2$, $B_p(R):=\{z\in \mathbb{C}: |z-p|<R\}$ is an open disc in $\mathbb{C}$. Suppose $\pi(x,y)=xy$ , $X_t=\pi^{-1}(t)$. Let $F$ be a holomorphic function defined on $X$ such that
$F$ is not identical to zero on each irreducible components of $X_t$ for every $t\in \Delta$. Define $f_t=F|_{X_{t}}$. Suppose $U=D_0(R,R)$ with $0<R\leq R_0$, $U_t=X_t\cap U$.

Let $g$ be the restriction of the Euclidean
metric on $\mathbb{C}^2$ to $X$, which means that $g=dx\otimes d\bar{x}+dy\otimes d\bar{y}$. Define
$X_t^*$ to be the smooth locus of $X_t$, hence $X_t^*$ is a complex
manifold, and for $t\neq 0$ we have $X_t^*=X_t$. Define $dV_t$ to be
the volume form of $g|_{X_t^*}$ on $X_t^*$.\\

The following lemma provide basic calculations about $dV_t$:

\begin{lem}
Define $dV_x=\frac{\sqrt{-1}}{2}dx\wedge d\overline{x}$, similarly
for $dV_y$. Then on $X_t=\{(x,y)\in\mathbb{C}^2\mid xy=t\}$, we have
\begin{displaymath}
dV_t=dV_x+dV_y=\frac{|x|^2+|y^2|}{|x|^2}dV_x.
\end{displaymath}
Define annulus $A(a,b):=\{z\in \mathbb{C}\mid a<|z|<b\}$. Define
\begin{displaymath}
K_t(R)  =\int_{U_t}\frac{dV_t}{|f_t|^{2c}},\quad I_t(R)
=\int_{A\left(\frac{|t|}{R}, R\right)}\frac{dV_x}{|f_t|^{2c}},\quad
J_t(R)  =\int_{A\left(\frac{|t|}{R},
R\right)}\frac{dV_y}{|f_t|^{2c}}.
\end{displaymath}
Then we have
\begin{displaymath}
K_t(R)=I_t(R)+J_t(R).
\end{displaymath}
\end{lem}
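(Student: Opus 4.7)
The plan is to verify both claims by direct computation, handling the case $t \neq 0$ and $t = 0$ separately, the first by parameterizing the smooth fiber $X_t$ with one of the coordinates, the second by exploiting the disjoint-axis structure of $X_0^*$.

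First, for $t \neq 0$, the fiber $X_t = \{xy = t\}$ is smooth and is parameterized globally by $x$ (restricted to $x \neq 0$) with $y = t/x$. A holomorphic tangent vector along $X_t$ is $\partial_x - (y/x)\partial_y$, so the Hermitian metric induced from $g = dx\otimes d\bar x + dy \otimes d\bar y$ has squared length $1 + |y|^2/|x|^2 = (|x|^2 + |y|^2)/|x|^2$. This immediately yields the second equality $dV_t = \frac{|x|^2+|y|^2}{|x|^2}\,dV_x$. To obtain the splitting $dV_t = dV_x + dV_y$, I would use that $dy = -(y/x)\,dx$ on $X_t$, hence $dV_y = \frac{\sqrt{-1}}{2}dy\wedge d\bar y = (|y|^2/|x|^2)\,dV_x$; substituting gives $dV_x + dV_y = \frac{|x|^2+|y|^2}{|x|^2}\,dV_x = dV_t$.

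For $t = 0$, the smooth locus $X_0^*$ is the disjoint union of the punctured axes $\{y=0,\ x\neq 0\}$ and $\{x=0,\ y\neq 0\}$. On the first, the induced volume form is $dV_x$ and $dV_y$ vanishes since $dy \equiv 0$ there; on the second the roles are reversed. Thus the identity $dV_0 = dV_x + dV_y$ holds on $X_0^*$ tautologically, and the origin, being the removed singular point, contributes nothing to any integral.

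For the second claim, I note that for $t \neq 0$ the inequality $|y| = |t|/|x| < R$ is equivalent to $|x| > |t|/R$, so the projection $U_t \to \mathbb{C}_x$ is a biholomorphism onto the annulus $A(|t|/R, R)$; symmetrically for the projection to $\mathbb{C}_y$. Integrating the $dV_x$-part using the first identification and the $dV_y$-part using the second gives precisely $K_t(R) = I_t(R) + J_t(R)$. For $t = 0$, the same identifications apply with $A(0,R) = B_0(R) \setminus \{0\}$, each annulus corresponding to one of the two punctured axes. The main obstacle here is nothing more than bookkeeping between the two parameterizations; the argument is essentially a direct geometric computation once the formula $dV_y = (|y|^2/|x|^2)\,dV_x$ along $X_t$ is observed.
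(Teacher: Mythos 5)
Your proof is correct. The paper simply states ``the proof is quite straightforward, so we omit it,'' and the direct parameterization argument you give (pulling back $g$ via $x\mapsto(x,t/x)$, using $dy=-(y/x)\,dx$ to identify $dV_y=(|y|^2/|x|^2)\,dV_x$, and then splitting $U_t$ into the two annular projections) is exactly the intended computation; you also correctly note that the origin has measure zero in $U_0$ so it may be discarded.
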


\begin{proof}
The proof is quite straight forward, so we omit it.
\end{proof}

Let $\mathbb{C}\{x,y\}$ be the ring of power series of $x,y$ in complex coefficients which is convergent in some neighborhood of $(0,0)\in \mathbb{C}^2$. Weierstrass preparation theorem implies that $\mathbb{C}\{x,y\}$ is a UFD.\\

The key to the proof of Theorem \ref{eff irreducible} is the following:

\begin{lem}[\cite{c-a00} 1.8.5] \label{newton polygon} If $f\in \mathbb{C}\{x,y\}$ is irreducible, then the Newton polygon of $f$ contains only one segment.
\end{lem}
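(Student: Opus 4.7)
The plan is to reduce to a Weierstrass polynomial in $y$ and then use Puiseux's theorem to exploit the fact that, for an irreducible Weierstrass polynomial, all Puiseux roots share a common leading exponent. After possibly swapping $x$ and $y$ (which only reflects the Newton polygon), I may assume $y\nmid f$; the cases $f=x\cdot\text{unit}$ and $f=y\cdot\text{unit}$ are trivial, since then the Newton polygon is a single point. Weierstrass preparation gives $f=u\cdot g$ with $u\in\mathbb{C}\{x,y\}$ a unit and $g=y^n+a_1(x)y^{n-1}+\cdots+a_n(x)$ a Weierstrass polynomial with $a_i(0)=0$. Since $u(0,0)\neq 0$, the lower-left boundary of the support is preserved under multiplication by $u$, so it suffices to treat $g$, which is irreducible in $\mathbb{C}\{x\}[y]$ by uniqueness of Weierstrass preparation.

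Next I would invoke Puiseux's theorem: the roots of $g$ in $y$ form a single Galois orbit $y_k(x)=\phi(\zeta^k x^{1/n})$ for $k=0,\ldots,n-1$, where $\phi(t)=c_m t^m+c_{m+1}t^{m+1}+\cdots\in\mathbb{C}\{t\}$, $c_m\neq 0$, $\zeta=e^{2\pi i/n}$, and the irreducibility of $g$ of degree $n$ forces $\gcd(\{i:c_i\neq 0\}\cup\{n\})=1$; in particular $\gcd(m,n)=1$. The crucial consequence is that every root has the same $x$-order, namely $\mathrm{ord}_x y_k(x)=m/n$.

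Since $a_i(x)=(-1)^i e_i(y_1,\ldots,y_n)$ is, up to sign, the $i$-th elementary symmetric polynomial of the roots, each summand $\prod_{k\in S}y_k$ with $|S|=i$ has $x$-order $im/n$, so $\mathrm{ord}_x a_i(x)\geq im/n$. For $i=n$ the leading term of $a_n(x)=(-1)^n\prod_k y_k(x)$ is $(-1)^n c_m^n\,\zeta^{mn(n-1)/2}x^m$, which is nonzero, so $\mathrm{ord}_x a_n(x)=m$ exactly. Therefore every monomial $x^j y^{n-i}$ actually appearing in $g$ satisfies $j\geq im/n$, i.e., the lattice point $(j,n-i)$ lies on or above the line segment from $(0,n)$ to $(m,0)$, while both endpoints are attained (by $y^n$ and by the leading term of $a_n(x)$). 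The Newton polygon of $g$ is therefore exactly this single segment.

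The main obstacle, and essentially the only place irreducibility is used, is the order estimate $\mathrm{ord}_x a_i(x)\geq im/n$: this relies on the fact that \emph{all} $n$ Puiseux roots share the leading exponent $m/n$, which is precisely the content of Puiseux's theorem for irreducible Weierstrass polynomials. Beyond that, the non-vanishing of the leading coefficient of $a_n(x)$ reduces to a direct calculation with $n$-th roots of unity, and the invariance of the Newton polygon under multiplication by a unit follows immediately from $u(0,0)\neq 0$.
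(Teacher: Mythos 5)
The paper does not prove this lemma: it is quoted as a known result, Casas-Alvero \cite{c-a00} 1.8.5, so there is no internal proof to compare against. Your argument is a correct and essentially standard Puiseux-series proof: reduce via Weierstrass preparation (noting that a unit multiplies the Newton polytope by a Minkowski summand $\mathbb{R}_{\ge 0}^2$, hence leaves it unchanged) to an irreducible Weierstrass polynomial $g$ of degree $n$; use Newton--Puiseux to write the roots as a single conjugacy orbit $y_k=\phi(\zeta^k x^{1/n})$ with common leading exponent $m/n$; deduce $\mathrm{ord}_x a_i \ge im/n$ from the elementary symmetric functions, with equality for $i=0$ and (by the explicit leading-coefficient computation) for $i=n$; conclude the support lies on or above the segment from $(0,n)$ to $(m,0)$ with both endpoints attained.

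One factual slip, though harmless: you write that irreducibility ``forces $\gcd(\{i:c_i\neq 0\}\cup\{n\})=1$; in particular $\gcd(m,n)=1$.'' The first assertion is correct, but it does \emph{not} imply the second. For instance $\phi(t)=t^2+t^3$ with $n=4$ gives $\gcd(2,3,4)=1$ yet $\gcd(m,n)=\gcd(2,4)=2$. Fortunately you never use $\gcd(m,n)=1$: the computation $\mathrm{ord}_x a_n=m$ and the bounds $\mathrm{ord}_x a_i\geq im/n$ hold regardless, and a single segment may well carry interior lattice points. So the proof stands; just delete or correct that parenthetical claim.
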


\begin{proof}[Proof of Theorem \ref{eff irreducible}]
Since $F$ is not identically to zero on each components of $X_0$, $F$ must contain monomials of $x^k$ and $y^l$ for some $k$ and $l$.

By Weierstrass preparation theorem, there exists $h_1, h_2\in \mathbb{C}\{x,y\}$ with $h(0,0)\neq 0$ and Weierstrass polynomials $F_1(x)$ and $F_2(y)$ such that $F=h_1\cdot F_1=h_2\cdot F_2$. For sufficiently small $R$, $h_1$ and $h_2$ are both nonzero in $D_0(R,R)$, with $|h_1|$, $|h_2|$, $|h_1^{-1}|$ and $|h_2^{-1}|$ all less than $M$ in $D_0(R,R)$.

According to Lemma \ref{newton polygon}, the Newton polygon $\gamma$ of $F$ contains only one segment. Denote the two endpoints of $\gamma$ to be $(k,0)$ and $(0,l)$. It is easy to see that the Newton polygons of $F_1$ and $F_2$ are the same as $\gamma$.

Choose $s\in \mathbb{C}$ such that $t=s^{k+l}$, and $y:=t/x$, $z:=x/s^l$,
$w:=y/s^k$, so $xy=t$ and $zw=1$.

Notice that the theorem is equivalent to $\lim_{t\rightarrow 0} K_t(R)=K_0(R)$. By symmetry of $x$ and $y$, we need only to prove $\lim_{t\rightarrow 0}I_t(R)=I_0(R)$. Next, we use parameters $(z,w)$ to express $I_t(R)$.

\begin{align*}
I_t(R) & =\int_{A\left(\frac{|t|}{R},
R\right)}\frac{dV_x}{|F(x,y)|^{2c}}\\
& =\int_{A\left(\frac{|s|^k}{R}, \frac{R}{|s|^l}\right)}\frac{|s|^{2l}
dV_z}{\left|F(s^l z,s^k w)\right|^{2c}}\\
& =|s|^{2l-2klc}\int_{A\left(\frac{|s|^k}{R},
\frac{R}{|s|^l}\right)}\frac{dV_z}{\left|\frac{1}{s^{kl}}F(s^l z,s^k w)\right|^{2c}}\\
& =|s|^{2l-2klc}\int_{A\left(\frac{|s|^k}{R},
\frac{R}{|s|^l}\right)}\frac{dV_z}{\left|\frac{1}{s^{kl}}F_1(s^l z,s^k w)\right|^{2c}\cdot |h_1(s^l z, s^k w)|^{2c}}
\end{align*}

To prove the theorem, we divide the integral domain $A\left(\frac{|s|^k}{R},
\frac{R}{|s|^l}\right)$ to three separately parts $A\left(\frac{1}{R_1}, R_1\right)$, $A\left(R_1, \frac{R}{|s|^l}\right)$ and $A\left(\frac{|s|^k}{R}, \frac{1}{R_1}\right)$. Denote
\begin{align*}
I_{t,1}(R,R_1)&:=|s|^{2l-2klc}\int_{A\left(\frac{1}{R_1}, R_1\right)}\frac{dV_z}{\left|\frac{1}{s^{kl}}F(s^l z,s^k w)\right|^{2c}},\\
I_{t,2}(R,R_1)&:=|s|^{2l-2klc}\int_{A\left(R_1, \frac{R}{|s|^l}\right)}\frac{dV_z}{\left|\frac{1}{s^{kl}}F(s^l z,s^k w)\right|^{2c}},\\
I_{t,3}(R,R_1)&:=|s|^{2l-2klc}\int_{A\left(\frac{|s|^k}{R}, \frac{1}{R_1}\right)}\frac{dV_z}{\left|\frac{1}{s^{kl}}F(s^l z,s^k w)\right|^{2c}}.
\end{align*}

So $I_t(R)=I_{t,1}(R,R_1)+I_{t,2}(R,R_1)+I_{t,3}(R,R_1)$. Similarly we denote $J_{t,i}(R,R_1)$ by interchange $z$ and $w$.

We shall estimate $I_{t,1}(R,R_1)$ at first. Suppose $F_1(z,w)=\sum_{m,n=0}^{\infty} a_{m,n}z^m w^n$, let $G_1(z,w):=\sum_{lm+kn=kl} a_{m,n}z^m w^n$ be the sum of monomials of $F$ that occur in $\gamma$.

Thus,
\begin{align*}
\frac{1}{s^{kl}}F_1(s^l z,s^k w)-G_1(z,w)
& =\frac{1}{s^{kl}}F_1(s^l z,s^k w)-\frac{1}{s^{kl}}G_1(s^l z,s^k w)\\
& =\frac{1}{s^{kl}}\sum_{lm+kn\geq kl+1} s^{lm+kn} a_{m,n}z^m w^n\\
& =\frac{1}{s^{kl}}\left(s^{kl+1}\cdot H_1(s,z,w)\right)\\
& =s\cdot H_1(s,z,w).
\end{align*}

Here $H_1(s,z,w):=\sum_{lm+kn\geq kl+1} s^{lm+kn-kl-1} a_{m,n}z^m w^n$ is a holomorphic function of $s$, $z$ and $w$.

For every $R_1>1$, $H_1(s,z,w)$ is bounded for small $|s|$ and $z\in A\left(\frac{1}{R_1}, R_1\right)$. Therefore, as $s\rightarrow 0$ we have the following uniform convergence:
\begin{displaymath}
\frac{1}{s^{kl}}F_1(s^l z,s^k w)\rightrightarrows G_1(z,w)\textrm{ in }A\left(\frac{1}{R_1}, R_1\right).
\end{displaymath}

Here we treat $z$ as a single variable for $w=1/z$.

Notice that $G_1(z,w)$ is a quasi-homogeneous polynomial of $z,w$ which contains monomials $z^k$ and $w^l$, thus the order of zero point of $F_1(z,1/z)$ is at most $\max\{k,l\}$. Here we assume $c<c_0(f_0)=\frac{1}{\max\{k,l\}}$, so
\begin{displaymath}
\int_{A\left(\frac{1}{R_1}, R_1\right)}\frac{dV_z}{|G_1(z,w)|^{2c}}<+\infty.
\end{displaymath}

Therefore, we have the following equality by using Theorem \ref{semicont smooth}:
\begin{displaymath}
\lim_{s\rightarrow 0}\int_{A\left(\frac{1}{R_1},
R_1\right)}\frac{dV_z}{\left|\frac{1}{s^{kl}}F_1(s^l z,s^k w)\right|^{2c}}=
\int_{A\left(\frac{1}{R_1}, R_1\right)}\frac{dV_z}{|G_1(z,w)|^{2c}}.
\end{displaymath}

Because $2l-2klc=2l(1-kc)>0$, $\lim_{s\rightarrow 0}|s|^{2l-2klc}=0$, thus
\begin{displaymath}
\lim_{s\rightarrow 0}|s|^{2l-2klc}\int_{A\left(\frac{1}{R_1},
R_1\right)}\frac{dV_z}{\left|\frac{1}{s^{kl}}F_1(s^l z,s^k w)\right|^{2c}}= 0.
\end{displaymath}

Since $\left|F(s^l z, s^k w)\right|\leq M\cdot\left|F_1(s^l z, s^k w)\right|$, we have
\begin{align*}
\varlimsup_{s\rightarrow 0} I_{t,1}(R,R_1) &=\varlimsup_{s\rightarrow 0}|s|^{2l-2klc}\int_{A\left(\frac{1}{R_1},
R_1\right)}\frac{dV_z}{\left|\frac{1}{s^{kl}}F(s^l z,s^k w)\right|^{2c}}\\ &\leq  M^{2c}\cdot\lim_{s\rightarrow 0}|s|^{2l-2klc}\int_{A\left(\frac{1}{R_1},
R_1\right)}\frac{dV_z}{\left|\frac{1}{s^{kl}}F_1(s^l z,s^k w)\right|^{2c}}\\  &= 0.
\end{align*}

Hence we have
\begin{equation}\label{eq:it1}
\lim_{s\rightarrow 0}I_{t,1}(R,R_1)= 0.
\end{equation}

Next, we shall estimate $I_{t,2}(R,R_1)$.

Since $F_1(z,w)$ is a Weierstrass polynomial of $z$, we have $\deg_z H_1(s,z,w)\leq k-1$.

Let $H_1(s,z,w)=\sum_{i=0}^{k-1} H_{1,i}(s,w)\cdot z^i$. When $|s|<\delta_1$, $|z|\geq R_1$ for some $\delta_1>0$, $R_1>1$,
\begin{align*}
\frac{|s|\cdot|H_1(s,z,w)|}{|z|^k} & \leq |s|\sum_{i=0}^{k-1}\frac{|H_{1,i}(s,w)|}{|z|^{k-i}}\\
 &\leq \frac{\delta_1}{|z|}\sum_{i=0}^{k-1}\sup_{D_0(\delta_1,1/R_1)}|H_{1,i}(s,w)|\\
 &\leq \frac{\delta_1}{|z|}\cdot C(\delta_1, R_1).
\end{align*}

Here $C(\delta_1, R_1)$ is a constant depending on $\delta_1$ and $R_1$, and $C(\delta_1, R_1)\geq C(\delta_1', R_1')$ if $\delta_1
\geq\delta_1'$ and $R_1\leq R_1'$.

Therefore, for every $0<\epsilon_1<1$ and any fixed $\delta_1>0$, there exists $R_1$ sufficiently large such that for every $|z|\geq R_1$ and $|s|\leq \delta_1$ we have
\begin{displaymath}
\frac{|s|\cdot|H_1(s,z,w)|}{|z|^k}\leq \frac{\epsilon_1}{2}.
\end{displaymath}

On the other hand, since $G_1$ is a quasi-homogeneous polynomial of $z$ and $w$, and $G_1$ is monic of $z$, we have
\begin{displaymath}
\lim_{z\rightarrow \infty}\frac{G_1(z,w)}{z^k}=1.
\end{displaymath}

So when $R_1$ is sufficiently large, for every $|z|\geq R_1$ we have
\begin{displaymath}
1-\frac{\epsilon_1}{2}\leq \frac{|G_1(z,w)|}{|z|^k}\leq 1+\frac{\epsilon_1}{2}.
\end{displaymath}

Since $\frac{1}{s^{kl}}F_1(s^l z,s^k w)=G_1(z,w)+s\cdot H_1(s,z,w)$, we can pick $R_1$ sufficiently large and $\delta_1>0$, such that for every $|z|\geq R_1$ and $|s|\leq \delta_1$, we have
\begin{displaymath}
1-\epsilon_1\leq\frac{\left|\frac{1}{s^{kl}}F_1(s^l z,s^k w)\right|}{|z|^k}\leq 1+\epsilon_1,
\end{displaymath}
which also means
\begin{displaymath}
1-\epsilon_1\leq\frac{\left|\frac{1}{s^{kl}}F(s^l z,s^k w)\right|}{\left|h_1(s^l z, s^k w)\cdot z^k\right|}\leq 1+\epsilon_1.
\end{displaymath}

Therefore, for every $|s|\leq \delta_1$ we have
\begin{equation}\label{eq:it2a}
(1+\epsilon_1)^{-2c}\leq\frac{I_{t,2}(R,R_1)}
{|s|^{2l-2klc}\int_{A\left(R_1,
\frac{R}{|s|^l}\right)}\frac{dV_z}{\left|h_1(s^l z, s^k w)\cdot z^k\right|^{2c}}}\leq (1-\epsilon_1)^{-2c}.
\end{equation}

Notice that
\begin{displaymath}
|s|^{2l-2klc}\int_{A\left(R_1,
\frac{R}{|s|^l}\right)}\frac{dV_z}{\left|h_1(s^l z, s^k w)\cdot z^k\right|^{2c}}
=\int_{A\left(|s|^l R_1,R\right)}\frac{dV_x}{\left|h_1\left(x, \frac{t}{x}\right)\cdot G_1(x,0)\right|^{2c}}.
\end{displaymath}

If $|x|\geq |s|^l R_1$, then $|t/x|\leq |s|^k/R_1\leq |s|^k$. So there exists $\delta_2\in(0,\delta_1)$ such that for any
$ x\in A\left(|s|^l R_1, R\right)$ and $|s|\leq\delta_2$, we have
\begin{displaymath}
1-\epsilon_1\leq \frac{\left|h_1\left(x, \frac{t}{x}\right)\right|}{|h_1(x,0)|}\leq 1+\epsilon_1,
\end{displaymath}
which also means
\begin{displaymath}
1-\epsilon_1\leq \frac{\left|h_1\left(x, \frac{t}{x}\right)\cdot G_1(x,0)\right|}{|F(x,0)|}\leq 1+\epsilon_1.
\end{displaymath}

Therefore, we have
\begin{equation}\label{eq:it2b}
(1+\epsilon_1)^{-2c}\leq\frac{|s|^{2l-2klc}\int_{A\left(|s|^l R_1, R\right)}\frac{dV_z}{\left|h_1(s^l z, s^k w)\cdot z^k\right|^{2c}}}
{\int_{A\left(|s|^l R_1, R\right)}\frac{dV_x}{|F(x,0)|^{2c}}}\leq (1-\epsilon_1)^{-2c}.
\end{equation}

Combining (\ref{eq:it2a}) and (\ref{eq:it2b}) together, we have
\begin{displaymath}
(1+\epsilon_1)^{-4c}\leq\frac{I_{t,2}(R,R_1)}{\int_{A\left(|s|^l R_1, R\right)}\frac{dV_x}{|F(x,0)|^{2c}}}
\leq (1-\epsilon_1)^{-4c}.
\end{displaymath}

It is easy to see that
\begin{displaymath}
\lim_{s\rightarrow 0}\int_{A\left(|s|^l R_1, R\right)}\frac{dV_x}{|F(x,0)|^{2c}}=\int_{A(0, R)}\frac{dV_x}{|F(x,0)|^{2c}}=I_0(R).
\end{displaymath}

Therefore, we have the following two inequalities.
\begin{equation}\label{eq:it2g}
\varliminf_{s\rightarrow 0} I_{t,2}(R,R_1)\geq (1+\epsilon_1)^{-4c}I_0(R).
\end{equation}
\begin{equation}\label{eq:it2l}
\varlimsup_{s\rightarrow 0} I_{t,2}(R,R_1)\leq (1-\epsilon_1)^{-4c}I_0(R).
\end{equation}

Finally, we shall estimate $I_{t,3}(R,R_1)$. We have the following equality:
\begin{align*}
I_{t,3}(R,R_1) & = |s|^{2l-2klc}\int_{A\left(\frac{|s|^k}{R}, \frac{1}{R_1}\right)}\frac{dV_z}{\left|\frac{1}{s^{kl}}F(s^l z,s^k w)\right|^{2c}}\\
& = |s|^{2l-2klc}\int_{A\left(R_1, \frac{R}{|s|^k}\right)}\frac{dV_w}{\left|\frac{1}{s^{kl}}F(s^l z,s^k w)\right|^{2c}\cdot|w|^4}
\end{align*}

We define $G_2(z,w)$ and $H_2(s,z,w)$ similarly. Because of the same reason in the discussion about $I_{t,1}(R,R_1)$, for $\epsilon_1>0$ there exists
sufficiently large $R_1$ such that for any $w\in A\left(R_1, \frac{R}{|s|^k}\right)$, we have
\begin{displaymath}
1-\epsilon_1\leq\frac{\left|\frac{1}{s^{kl}}F(s^l z,s^k w)\right|}{\left|h_2(s^l z, s^k w)\cdot w^l\right|}\leq 1+\epsilon_1.
\end{displaymath}

So for any $w\in A\left(R_1, \frac{R}{|s|^k}\right)$, we have
\begin{align*}
\left|\frac{1}{s^{kl}}F(s^l z,s^k w)\right| & \geq (1-\epsilon_1)\left|h_2(s^l z, s^k w)\right|\cdot |w|^l\\
& \geq \frac{1-\epsilon_1}{M}\cdot |w|^l.
\end{align*}

Therefore,
\begin{align*}
I_{t,3}(R,R_1)
& = |s|^{2l-2klc}\int_{A\left(R_1, \frac{R}{|s|^k}\right)}\frac{dV_w}{\left|\frac{1}{s^{kl}}F(s^l z,s^k w)\right|^{2c}\cdot|w|^4}\\
& \leq |s|^{2l-2klc}\int_{A\left(R_1, \frac{R}{|s|^k}\right)}\frac{dV_w}{\left(\frac{1-\epsilon_1}{M}\cdot |w|^l\right)^{2c}\cdot|w|^4}\\
& =\frac{M^{2c}}{(1-\epsilon_1)^{2c}}|s|^{2l-2klc}\int_{A\left(R_1, \frac{R}{|s|^k}\right)}\frac{dV_w}{|w|^{2lc+4}}\\
& =\frac{M^{2c}}{(1-\epsilon_1)^{2c}}|s|^{2l-2klc}\int_{0}^{2\pi}d\theta\int_{R_1}^{\frac{R}{|s|^k}}r^{-2lc-4}\cdot rdr\\
& \leq\frac{2\pi M^{2c}}{(1-\epsilon_1)^{2c}}|s|^{2l-2klc}\int_{R_1}^{+\infty}r^{-2lc-3}dr\\
& =\frac{2\pi M^{2c}}{(1-\epsilon_1)^{2c}}\cdot \frac{R_1^{-2lc-2}}{2lc+2} |s|^{2l-2klc}.
\end{align*}

Notice that $2l-2klc=2kl(\frac{1}{k}-c)>0$, so
\begin{displaymath}
\varlimsup_{s\rightarrow 0}I_{t,3}(R,R_1)\leq \lim_{s\rightarrow 0}\frac{2\pi M^{2c}}{(1-\epsilon_1)^{2c}}\cdot \frac{R_1^{-2lc-2}}{2lc+2} |s|^{2l-2klc}=0,
\end{displaymath}
which means
\begin{equation}\label{eq:it3}
\lim_{s\rightarrow 0}I_{t,3}(R,R_1)=0.
\end{equation}

Combining (\ref{eq:it1}), (\ref{eq:it2g}), (\ref{eq:it2l}) and (\ref{eq:it3}) together, we have
\begin{eqnarray}\label{eq:itleps}
\varlimsup_{s\rightarrow 0}I_t(R) & = &\sum_{i=1}^3\varlimsup_{s\rightarrow 0}I_{t,i}(R,R_1)\nonumber\\
& = &\varlimsup_{s\rightarrow 0}I_{t,2}(R,R_1)\nonumber\\
& \leq & (1-\epsilon_1)^{-4c}I_0(R).
\end{eqnarray}

Similarly,
\begin{equation}\label{eq:itgeps}
\varliminf_{s\rightarrow 0}I_t(R)\geq (1+\epsilon_1)^{-4c}I_0(R).
\end{equation}

When $\epsilon_1\rightarrow 0$, the limits of (\ref{eq:itleps}) and (\ref{eq:itgeps}) are the following:
\begin{displaymath}
I_0(R)\leq \varliminf_{s\rightarrow 0}I_t(R)\leq \varlimsup_{s\rightarrow 0}I_t(R)\leq I_0(R),
\end{displaymath}
which means
\begin{displaymath}
\lim_{t\rightarrow 0} I_t(R)=\lim_{s\rightarrow 0} I_t(R)=I_0(R).
\end{displaymath}

Hence we finish the proof.
\end{proof}

At the end of this section, we may show the existence of uniform upper bounds for integrals along fibers as a direct corollary of Theorem \ref{eff irreducible}.

\begin{thm}\label{eff dim 2}
For any $F\in \mathbb{C}\{x,y\}$ and any $0<c<c_0(f_0)$, there exists $U=D_0(R,R)$, $\delta>0$
and $M=M(R,F,c,\delta)>0$, such that the inequality
\begin{displaymath}
\int_{U_t}\frac{dV_t}{|f_t|^{2c}}\leq M
\end{displaymath}
holds for all $|t|\leq \delta$.
\end{thm}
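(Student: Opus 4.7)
The plan is to reduce the problem to the irreducible case handled by Theorem~\ref{eff irreducible} via the unique factorization of $\mathbb{C}\{x,y\}$, and then recombine the factor-wise bounds using H\"older's inequality.

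First, using the Weierstrass preparation theorem together with the UFD structure of $\mathbb{C}\{x,y\}$, I would write $F = u \cdot F_1^{m_1}\cdots F_s^{m_s}$, where $u$ is a unit and each $F_i$ is an irreducible Weierstrass polynomial. On a sufficiently small polydisc $D_0(R,R)$, both $|u|$ and $|u^{-1}|$ are bounded, so $|u|^{-2c}$ is bounded, and it suffices to control $\int_{U_t}\prod_i |F_i|^{-2cm_i}\,dV_t$ uniformly in $t$. After absorbing into $u$ any factor that does not vanish at the origin and shrinking $R$ if necessary, I may assume each $F_i$ satisfies the hypotheses of Theorem~\ref{eff irreducible}, i.e.\ neither $F_i(x,0)$ nor $F_i(0,y)$ is identically zero.

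Setting $k_i = \mathrm{ord}_0 F_i(x,0)$ and $l_i = \mathrm{ord}_0 F_i(0,y)$, one has $c_0(F_i|_{X_0}) = 1/\max(k_i, l_i)$; with $k = \sum_i m_i k_i$ and $l = \sum_i m_i l_i$ one also has $c_0(f_0) = 1/\max(k,l)$. Since $\max(k,l) \geq m_i\max(k_i,l_i)$ for every $i$, the hypothesis $c<c_0(f_0)$ forces $cm_i < c_0(F_i|_{X_0})$ for each $i$. Applying Theorem~\ref{eff irreducible} to every $F_i$ with exponent $cm_i$ then yields constants $R_i, \delta_i, M_i$ such that $\int_{U_{i,t}} |F_i|^{-2cm_i}\,dV_t \leq M_i$ for $|t|\leq\delta_i$, where $U_i = D_0(R_i,R_i)$. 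Taking $R = \min_i R_i$, $\delta=\min_i\delta_i$, and $U = D_0(R,R)$ gives factor-wise uniform bounds on a common polydisc.

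Finally I would combine these bounds via H\"older's inequality applied to $|F|^{-2c} \leq C \prod_i |F_i|^{-2cm_i}$, choosing weights $p_i>1$ with $\sum_i 1/p_i = 1$ and $cm_ip_i < c_0(F_i|_{X_0})$, so that $\int_{U_t}|F|^{-2c}\,dV_t \leq C\prod_i \bigl(\int_{U_t}|F_i|^{-2cm_ip_i}\,dV_t\bigr)^{1/p_i}$ is bounded uniformly in $t$. The key technical point, and the main potential obstacle, is this choice of H\"older weights: a compatible family $(p_i)$ exists only when $c \sum_i m_i \max(k_i,l_i) < 1$, a condition which can be strictly stronger than $c<c_0(f_0)$ when distinct $F_i$'s have Newton polygons of differing slopes. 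When this happens I would refine the combining step by a partition of unity separating tubular neighborhoods of the pairwise disjoint curves $V(F_i)\setminus\{(0,0)\}$ (on each such tube the remaining $|F_j|$ are bounded below, reducing $|F|^{-2c}$ to a constant multiple of a single $|F_i|^{-2cm_i}$ and allowing a direct application of Theorem~\ref{eff irreducible}), and handle the residual integral over a shrinking polydisc $D_0(\rho,\rho)$ around the origin by using that $\int_{D_0(\rho,\rho)\cap X_t} |F_i|^{-2cm_i}\,dV_t$ tends to its $X_0$-analogue as $t\to 0$ by Theorem~\ref{eff irreducible}, which in turn tends to $0$ as $\rho\to 0$.
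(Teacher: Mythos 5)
Your core approach is the paper's: Lemma~\ref{eff reducible} applies Young's inequality $|F_1F_2|^{-2c}\le\frac{l_1}{l}|F_1|^{-2cl/l_1}+\frac{l_2}{l}|F_2|^{-2cl/l_2}$, which is just the two-factor form of your H\"older splitting, and runs the same UFD induction. You have, moreover, correctly identified an obstruction that the paper's argument overlooks. Lemma~\ref{eff reducible} asserts $c_0(F_1F_2|_{X_0})\le 1/(l_1+l_2)$, which would make the Young step cover the whole range $c<c_0(f_0)$; but this inequality fails when the Newton polygons of $F_1$ and $F_2$ slope in opposite directions. For $F_1=x^2-y$ and $F_2=x-y^2$ one has $l_1=l_2=2$ so $1/(l_1+l_2)=1/4$, yet $F_1F_2$ has order $3$ on each axis and hence $c_0(F_1F_2|_{X_0})=1/3>1/4$. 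The Young/H\"older step therefore only handles $c<1/(l_1+l_2)$, exactly the constraint $c\sum_i m_i\max(k_i,l_i)<1$ you isolated, and the gap you flagged is present in the paper's own proof.

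Your proposed patch, however, does not close this gap. The tubular neighborhoods of $V(F_i)\setminus\{(0,0)\}$ handle the region away from the origin, but the entire difficulty lives in the residual integral $\int_{D_0(\rho,\rho)\cap X_t}|F|^{-2c}\,dV_t$, and there your sketch supplies only estimates on the individual $\int_{D_0(\rho,\rho)\cap X_t}|F_i|^{-2cm_i}\,dV_t$. To convert those into a bound on $\int|F|^{-2c}$ over $D_0(\rho,\rho)\cap X_t$ you must again recombine by H\"older or Young, which is precisely the step that fails for $c\in[1/(l_1+l_2),\,c_0(f_0))$. Some genuinely new estimate near the origin is required (for instance a rescaling argument in the style of the proof of Theorem~\ref{eff irreducible}, exploiting that along $X_t$ the zero loci of the distinct $F_i$'s separate at different scales dictated by their Newton-polygon slopes), and neither your sketch nor the paper's Lemma~\ref{eff reducible} supplies one.
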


\begin{lem}\label{eff reducible}
If Theorem \ref{eff dim 2} is true for $F_1$ and $F_2$, then it is also true for $F:=F_1\cdot F_2$.
\end{lem}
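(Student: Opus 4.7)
My plan is to use H\"older's inequality to decouple the integrand $|F_1 F_2|^{-2c} = |F_1|^{-2c} |F_2|^{-2c}$ and reduce the problem to the hypothesized bounds for $F_1$ and $F_2$ separately. For conjugate exponents $p, q > 1$ with $1/p + 1/q = 1$,
\[
\int_{U_t}\frac{dV_t}{|f_t|^{2c}} \;\le\; \left(\int_{U_t} |F_1|^{-2cp}\, dV_t\right)^{1/p} \left(\int_{U_t} |F_2|^{-2cq}\, dV_t\right)^{1/q}.
\]
Once we pass to a common polydisc $U \subset U_1 \cap U_2$ and a common radius $\delta \le \min(\delta_1, \delta_2)$ coming from applying Theorem~\ref{eff dim 2} to $F_1$ and $F_2$, each factor on the right is uniformly bounded in $t$ as long as $cp < c_0((f_1)_0)$ and $cq < c_0((f_2)_0)$.

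The central step is to choose such a pair $(p,q)$. Rewriting the two constraints and summing via $1/p + 1/q = 1$, a valid choice exists iff
\[
c \;<\; \frac{c_0((f_1)_0)\, c_0((f_2)_0)}{c_0((f_1)_0) + c_0((f_2)_0)}.
\]
In the setting of this section each irreducible component $X_0^{(i)}$ of $X_0 = \{xy = 0\}$ is smooth, so $c_0(F_j|_{X_0^{(i)}}) = 1/\operatorname{ord}_0 F_j|_{X_0^{(i)}}$, and since orders of vanishing add under multiplication one has the harmonic-mean identity $c_0((F_1 F_2)|_{X_0^{(i)}}) = c_0(F_1|_{X_0^{(i)}}) c_0(F_2|_{X_0^{(i)}})/(c_0(F_1|_{X_0^{(i)}}) + c_0(F_2|_{X_0^{(i)}}))$. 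Taking minima over $i$, the assumption $c < c_0(f_0) = \min_i c_0((F_1 F_2)|_{X_0^{(i)}})$ enforces the harmonic-mean constraint needed for H\"older, but \emph{on each branch separately} rather than globally.

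The main obstacle is precisely that a single global pair $(p,q)$ may fail to satisfy the constraint when the dominant singular branches of $F_1$ and $F_2$ differ, so that $\min_i \phi(c_0(F_1|_{X_0^{(i)}}), c_0(F_2|_{X_0^{(i)}}))$ exceeds $\phi(c_0((f_1)_0), c_0((f_2)_0))$ (with $\phi(a,b) = ab/(a+b)$). To reach the full range $c < c_0(f_0)$ I would partition $U$ into a tubular neighborhood of each branch $\{x = 0\}$ and $\{y = 0\}$, together with an annular region on which $|f_0|$ is bounded below (where the integrand is trivially controlled). On each branch neighborhood one applies H\"older with exponents $p_i, q_i$ adapted to the branch-specific orders of vanishing, using the hypothesis to dominate the relevant sub-integrals of $|F_1|^{-2cp_i}$ and $|F_2|^{-2cq_i}$ by the full $U_t$-integrals. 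Summing the branch contributions and the trivial central contribution then yields the uniform bound for every $c < c_0(f_0)$, completing the proof.
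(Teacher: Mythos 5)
Your H\"older decomposition is exactly what the paper does, in a slightly different guise: the paper applies Young's inequality $|F_1F_2|^{-2c} \leq \tfrac{1}{p}|F_1|^{-2cp} + \tfrac{1}{q}|F_2|^{-2cq}$ with the explicit conjugate pair $p = l/l_1$, $q = l/l_2$, where $l_i = 1/c_0(F_i)$ and $l = l_1+l_2$, arriving at precisely the harmonic-mean constraint $c < 1/l$ that you derive. Up to this step your proposal and the paper's proof coincide. You are also right to flag the potential shortfall: since $c_0(f_0) = 1/\max(\operatorname{ord}_x F,\operatorname{ord}_y F)$ while $l = \max(\operatorname{ord}_x F_1,\operatorname{ord}_y F_1)+\max(\operatorname{ord}_x F_2,\operatorname{ord}_y F_2)$, one can have $1/l < c_0(f_0)$ when the dominant branches of $F_1$ and $F_2$ differ --- for instance $F_1 = x^3+y$, $F_2 = x+y^3$ gives $c_0(f_0)=1/4$ but $1/l=1/6$. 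The paper's proof is in fact internally inconsistent at exactly this point, asserting $c_0(F)\le 1/l$ in one sentence and writing $1/l\le c_0(F)$ in the next, so your instinct that the global exponent choice may not reach the full range $c<c_0(f_0)$ is well founded.

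The branch-by-branch remedy you sketch, however, does not close the gap. On a tubular neighborhood $N_1$ of the $\{y=0\}$ branch you need $cp_1<1/\operatorname{ord}_x F_1$ and $cq_1<1/\operatorname{ord}_x F_2$ with $1/p_1+1/q_1=1$, and as $c$ approaches $1/(\operatorname{ord}_x F_1+\operatorname{ord}_x F_2)$ one of the products $cp_1$, $cq_1$ is forced above $c_0(F_1)$ or $c_0(F_2)$: in the example above, $c$ near $1/4$ forces $cq_1$ near $1>1/3=c_0(F_2)$. At that point the step ``dominate the sub-integral by the full $U_t$-integral and invoke the hypothesis'' fails, because Theorem~\ref{eff dim 2} for $F_2$ only controls exponents strictly below $c_0(F_2)$, and for an exponent beyond that there is no uniform-in-$t$ bound to invoke; indeed the $t=0$ integral already diverges. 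What you actually need is a branch-localized bound on $\int_{N_1\cap U_t}|F_j|^{-2c'}\,dV_t$ valid for $c'<1/\operatorname{ord}_x F_j$, which does not follow from the lemma's hypothesis used as a black box; it would require re-running an argument in the spirit of the $I_{t,2}$ estimate from the proof of Theorem~\ref{eff irreducible}, localized to each branch. So the strategy matches the paper's and the critical observation is sharp, but the fix as outlined has a genuine gap of its own.
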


\begin{proof}
Suppose $c_0(F_i)=\frac{1}{l_i}$, then the multiplicity of $F$ at $0$ when restricts to $X_0$ is at least $l:=l_1+l_2$, thus $c_0(F\cdot G)\leq \frac{1}{l}$.

For every $c<\frac{1}{l}\leq c_0(F)$, by Young's inequality we have
\begin{displaymath}
|F|^{-2c}\leq \frac{l_1}{l}|F_1|^{-2c\cdot\frac{l}{l_1}}+\frac{l_2}{l}|F_2|^{-2c\cdot\frac{l}{l_2}}.
\end{displaymath}

Here $c_i=c\cdot\frac{l}{l_i}<c_0(F_i)$.

Hence for every $|t|<\min(\delta_1,\delta_2)$,

\begin{align*}
\int_{U_t}\frac{dV_t}{|F|^{2c}} & \leq \frac{l_1}{l}\int_{U_t}\frac{dV_t}{|F_1|^{2c_1}}+\frac{l_2}{l}\int_{U_t}\frac{dV_t}{|F_2|^{2c_2}}\\
 & \leq \frac{l_1}{l}\cdot M_1+\frac{l_2}{l}\cdot M_2.
\end{align*}

Define $M:=\frac{l_1}{l}\cdot M_1+\frac{l_2}{l}\cdot M_2$, hence Theorem \ref{eff dim 2} is true for $F$.
\end{proof}

\begin{proof}[Proof of Theorem \ref{eff dim 2}]
Let $\mathcal{S}=\{F\in \mathbb{C}\{x,y\}\mid \textit{Theorem \ref{eff dim 2} is true for }F\}$.

By Theorem \ref{eff irreducible}, $\mathcal{S}$ contains all irreducible elements of $\mathbb{C}\{x,y\}$; by Lemma \ref{eff reducible}, $\mathcal{S}$ is closed under multiplication. Since $\mathbb{C}\{x,y\}$ is a UFD, we have $\mathcal{S}=\mathbb{C}\{x,y\}$, which means that Theorem \ref{eff dim 2} is true for every $F\in \mathbb{C}\{x,y\}$.
\end{proof}

\begin{rem}
The proof of Theorem \ref{eff irreducible} cannot be generalized to higher dimensions directly, since Lemma \ref{newton polygon} is usually false in higher dimensions.
\end{rem}

\section{Counterexamples in non-holomorphic families}\label{sect 5}

In Section \ref{sect 3}, we prove the semi-continuity of complex singularity exponents for holomorphic families.
However, Example \ref{counterex 1} shows that this is not true for non-holomorphic families.
Moreover, we construct a sequence of counterexamples (see Example \ref{counterex n}) to the semi-continuity property for non-holomorphic families in this section. The families in Example \ref{counterex n} are $C_{loc}^{n+\frac{1}{2}}$ for arbitrary large $n$ but not $C^\infty$.\\

For every $n\in\mathbb{N}$, define $V_n:=\{q(z^2)+c\cdot
z^{2n+1}\mid q(z)\in \mathbb{C}[z],\, \deg q \leq 2n+1,\,
c\in\mathbb{C}\}$, then $V_n$ is a complex linear subspace of
$\mathbb{C}[z]$ with $\dim V_n=2n+3$.
Define $W_n:=\{P(z)\in V_n: (z-1)^{2n+2}\mid P(z)\}$. We have
$W_n\neq 0$ because $\dim V_n=2n+3>2n+2=\deg (z-1)^{2n+2}$.

\begin{lem}\label{polynomial}
Let $\mathcal{N}=\{n\in \mathbb{N}: W_n$ contains a polynomial $
P_n(z)$ with the $z^{4n+2}$ term and constant term both nonzero.$\}$,
then $\mathcal{N}$ is an infinite set.
\end{lem}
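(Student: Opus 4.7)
The plan is: (i) establish that any nonzero $P_n \in W_n$ has the form $P_n(z)=(z-1)^{2n+2}R_n(z)$ with $R_n$ palindromic of degree $\leq 2n$; (ii) deduce from palindromicity that the constant term and the $z^{4n+2}$ coefficient of $P_n$ coincide, both equal to $R_n(0)$; and (iii) show $R_n(0)\neq 0$ for infinitely many $n$.

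For (i), I would first show $\dim W_n=1$. The evaluation map $\Phi:V_n\to\mathbb{C}^{2n+2}$, $P\mapsto(P(1),P'(1),\dots,P^{(2n+1)}(1))$, has kernel $W_n$; restricted to $V_n^{\text{even}}:=\{q(z^2):\deg q\leq 2n+1\}$ it is injective, because any $Q$ in its kernel is divisible by $(z-1)^{2n+2}$ and, by evenness, also by $(z+1)^{2n+2}$, hence by $(z^2-1)^{2n+2}$; since $\deg Q\leq 4n+2<4n+4$, $Q\equiv 0$. Thus $\Phi|_{V_n^{\text{even}}}$ is an isomorphism, $\Phi$ is surjective, and $\dim W_n=1$. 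Writing $P_n=(z-1)^{2n+2}R_n$ with $\deg R_n\leq 2n$ and letting $c$ be the coefficient of $z^{2n+1}$ in $P_n$ (nonzero, else $P_n$ would be even, divisible by $(z^2-1)^{2n+2}$, hence $\equiv 0$), the identity $P_n(z)-P_n(-z)=2cz^{2n+1}$ yields
\[
(z-1)^{2n+2}R_n(z)-(z+1)^{2n+2}R_n(-z)=2cz^{2n+1}.
\]
Substituting $z\mapsto 1/z$ and clearing denominators shows that $\widetilde R_n(z):=z^{2n}R_n(1/z)$ satisfies the same equation with the same $c$; since $\dim W_n=1$ makes the solution unique given $c$, $\widetilde R_n=R_n$, so $R_n$ is palindromic. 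For (ii), palindromicity yields $r_0=r_{2n}$, and since $(z-1)^{2n+2}$ is monic with constant term $1$, $P_n(0)=R_n(0)=r_0=r_{2n}=[z^{4n+2}]P_n$; hence $n\in\mathcal{N}$ if and only if $R_n(0)\neq 0$.

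The remaining and main obstacle is (iii). Setting $u=z^2$ and $R_n(z)=\hat R_e(u)+z\hat R_o(u)$ converts the functional equation into
\[
A(u)\hat R_o(u)-C(u)\hat R_e(u)=cu^n,\qquad A(u):=\sum_l\binom{2n+2}{2l}u^l,\ C(u):=\sum_l\binom{2n+2}{2l+1}u^l,
\]
with $\deg\hat R_e\leq n$, $\deg\hat R_o\leq n-1$. Suppose for contradiction $r_0=0$. Evaluation at $u=0$ gives $r_1=(2n+2)r_0=0$, palindromicity then forces $r_{2n-1}=r_{2n}=0$, so a factor of $u$ drops out of both $\hat R_e$ and $\hat R_o$, and dividing through produces a reduced equation of the same shape with right-hand side $cu^{n-1}$ and degree bounds lowered by two. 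Iterating peels off further palindromic pairs of coefficients; combining the successive "$u=0$" relations $r_{2k+1}=(2n+2)r_{2k}$ with the other $u^j$-coefficient conditions and the palindromic relations $r_k=r_{2n-k}$ should force every $r_k$ to vanish, contradicting $c\neq 0$. The delicate technical point, which I expect to be the bulk of the work, is verifying that the reduced linear systems remain non-degenerate for infinitely many $n$; this amounts to non-vanishing of certain small determinants of binomial coefficients, which can be verified by direct computation for small $n$ (the cases $n=0,1,2,3$ already all yield $R_n(0)\neq 0$) together with a leading-term dominance argument to handle large $n$, thereby exhibiting an infinite subfamily of $n$ in $\mathcal{N}$.
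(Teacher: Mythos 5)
Your steps (i) and (ii) are correct and are, in fact, a refinement of the paper's first observation. Showing $\dim W_n = 1$ via the evenness trick and deducing that $R_n$ is palindromic (so $P_n(0) = [z^{4n+2}]P_n$) is a clean way to see what the paper gets from its reversal substitution $P_n(z) \mapsto P_n(z) + z^{4n+2}P_n(1/z)$: namely that for any nonzero $P_n \in W_n$ the constant and top coefficients vanish together or not at all. So up to this point you are on solid ground, and actually prove slightly more than the paper does at the corresponding stage.

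The genuine gap is in (iii). Your plan is to fix $n$, assume $r_0 = 0$, and iterate the "peel a factor of $u$" step until all $r_k$ vanish. But after the first peel the $u=0$ evaluation of $A\hat R_o' - B\hat R_e' = c u^{n-1}$ gives $r_3 = (2n+2)r_2$, \emph{not} $r_2 = 0$, so the iteration does not close up on its own; to keep going you would need the constant term of the \emph{reduced} system to vanish, and you supply no mechanism forcing this. You acknowledge the hole yourself ("the bulk of the work", "non-degeneracy of certain determinants of binomial coefficients... together with a leading-term dominance argument"), but none of that is actually carried out, and there is no evident reason those determinants are easy to control. In effect you are trying to prove the stronger statement $\mathcal N = \mathbb N$, which is not what the lemma asks and which requires the very computation you leave open.

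The paper's finish is much cheaper and you could graft it directly onto your (i)+(ii). Once both terms of $P_n$ vanish, one has $P_n(z)/z^2 \in W_{n-1}$: the constant and leading terms of the even part drop out, and $(z-1)^{2n+2} \mid P_n/z^2$. Iterating $k = n - N$ times (assuming, toward a contradiction, that every $n \ge N$ has both terms vanishing) yields a nonzero $P_N \in W_N$ with $(z-1)^{2n+2} \mid P_N$, while $\deg P_N \le 4N+2$; hence $2n+2 \le 4N+2$, i.e. $n \le 2N$, contradicting that $n$ was arbitrary. No determinant estimates, no case analysis in $n$. So: (i) and (ii) are correct and even a bit stronger than what the paper records, but (iii) as written is a sketch of a harder, unfinished computation; replacing it with the degree-count after the $z^2$-peel closes the argument.
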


\begin{proof}
Suppose the conclusion is not true, then there exists
$N\in\mathbb{N}$ such that for every $n\geq N$ and every nonzero
$P_n(z)\in W_n$, at least one of $z^{4n+2}$ term and constant term
of $P_n$ vanishes.

Let us fix $n>N$ sufficiently large. If the constant term of $P_n$ is nonzero and the $z^{4n+2}$ term of
$P_n$ is zero, let $\widetilde{P_n}(z):=P_n(z)+z^{4n+2}\cdot
P_n(1/z)$, then of course $(z-1)^{2n+2}\mid \widetilde{P_n}(z)$,
thus $\widetilde{P_n}(z)\in W_n$ with $z^{4n+2}$ term and constant
term both nonzero, which contradicts to our assumption. Hence the
constant term of $P_n$ must vanish, so does the $z^{4n+2}$ term.

Because the $z^{4n+2}$ term and constant term of $P_n$ both
vanishes, $P_n(z)/z^2$ must belong to $W_{n-1}$, then we may denote
$P_{n-1}(z):=P_n(z)/z^2$. After same argument applying to $P_{n-1}$,
we will get $P_{n-2}(z):=P_{n-1}(z)/z^2\in W_{n-2}$. Repeat this process to
$P_{n-k}$ for $k=n-N$, finally we have that $P_n(z)=z^{2k}\cdot
P_{n-k}(z)$ and $P_{n-k}\in W_{n-k}$.

Since $(z-1)^{2n+2}\mid \widetilde{P_n}(z)$, we have
$(z-1)^{2n+2}\mid P_{n-k}(z)$, thus $2n+2\leq\deg P_{n-k}\leq 4N+2$,
i.e. $n\leq 2N$, which contradicts to our assumption that $n$ can be
sufficiently large. Hence the lemma is proved.
\end{proof}

Next, we construct a sequence of counterexamples to the semi-continuity property for non-holomorphic families.

\begin{expl}\label{counterex n}
Suppose $n\in\mathcal{N}$. Let $P_n(z)=q_n(z^2)+c_n\cdot z^{2n+1}$. Define
$Q_n(x,y):=q_n(x/y)\cdot y^{2n+1}$, $F_n(x,y)=Q_n(x,y)+c_n\cdot
|xy|^\frac{2n+1}{2}$. Then for every $t\in \mathbb{R}_+$ and
$s=\sqrt t\in \mathbb{R}_+$, we have
\begin{displaymath}
c_{(s,s)}(F_n|_{X_t})\leq
\frac{1}{2n+2}<\frac{1}{2n+1}=c_{(0,0)}(F_n|_{X_0}).
\end{displaymath}

Therefore,
\begin{displaymath}
\frac{1}{2n+1}=c_{(0,0)}(F_n|_{X_0})>\frac{1}{2n+2}\geq\varliminf_{s\rightarrow 0}c_{(s,s)}(F_n|_{X_t}).
\end{displaymath}

Thus, the semi-continuity property of complex singularity exponents of these families fail to hold.
\end{expl}

\begin{proof}
We shall estimate $c_{(s,s)}(F_n|_{X_t})$ first.

\begin{align*}
F_n\left(x,\frac{t}{x}\right) & = Q_n\left(x,\frac{t}{x}\right)+c_n\cdot t^\frac{2n+1}{2}\\
 & =
 q_n\left(\frac{x^2}{t}\right)\cdot\left(\frac{t}{x}\right)^{2n+1}+c_n\cdot
 t^\frac{2n+1}{2}\\
 & = q_n\left(\left(\frac{x}{s}\right)^2\right)\cdot\frac{s^{4n+2}}{x^{2n+1}}+c_n\cdot
 s^{2n+1}\\
 & = \frac{s^{4n+2}}{x^{2n+1}}\cdot
 \left(q_n\left(\left(\frac{x}{s}\right)^2\right)+c_n\cdot\left(\frac{x}{s}\right)^{2n+1}\right)\\
 & = \frac{s^{4n+2}}{x^{2n+1}}\cdot p_n\left(\frac{x}{s}\right).
\end{align*}

Thus, we have
\begin{align*}
c_{(s,s)}(F_n|_{X_t}) & = \frac{1}{ord_{x=s}(F_n(x,t/x))}\\
& = \frac{1}{ord_{x=s} (p_n(x/s))}\\
& \leq \frac{1}{2n+2}.
\end{align*}

On the other hand, $F_n(x,y)=Q_n(x,y)$ is a homogenous polynomial
with degree $2n+1$ when $xy=0$. Besides, $x^{2n+1}$ term and
$y^{2n+1}$ term both occur in $Q_n(x,y)$. Consequently,
$c_{(0,0)}(F_n|_{X_0})=\frac{1}{2n+1}$, hence we finish our proof.
\end{proof}

Next proposition shows that for any $n\in\mathcal{N}$, $F_n\in C_{loc}^{n,\frac{1}{2}}(\mathbb{C}^2)$ and $F_n
\notin C^\infty(\mathbb{C}^2)$, hence $F_n$ is not holomorphic.

\begin{prop}
For every $n\in\mathcal{N}$, $F_n\in C_{loc}^{n,\frac{1}{2}}(\mathbb{C}^2)$ while $F_n\notin C^\infty(\mathbb{C}^2)$.
\end{prop}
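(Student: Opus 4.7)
The plan is to decompose $F_n=Q_n+c_n G_n$ with $G_n(x,y):=|xy|^{(2n+1)/2}$, analyze each summand's regularity separately, and then obstruct smoothness via a real one-dimensional restriction.

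Since $Q_n(x,y)=q_n(x/y)\cdot y^{2n+1}$ is, after clearing denominators, a homogeneous polynomial of degree $2n+1$ in the real and imaginary parts of $x$ and $y$, one has $Q_n\in C^\infty(\mathbb{C}^2)$, so the regularity of $F_n$ is entirely controlled by $G_n=|x|^{(2n+1)/2}\cdot|y|^{(2n+1)/2}$. For each factor, I would prove that $\varphi(z):=|z|^{(2n+1)/2}=(u^2+v^2)^{(2n+1)/4}$, with $z=u+iv\in\mathbb{C}$, lies in $C_{loc}^{n,1/2}(\mathbb{C})$. Away from $z=0$ this is smooth. At the origin, an induction on the order of differentiation shows that every $k$-th order partial derivative of $\varphi$ is bounded by a constant multiple of $r^{(2n+1-2k)/2}$ where $r=|z|$; thus for $k\leq n$ the derivative extends continuously to $0$, and for $k=n$ it is bounded by $r^{1/2}$, which is $\tfrac12$-H\"older (using $|a^{1/2}-b^{1/2}|\leq|a-b|^{1/2}$). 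Because $|x|^{(2n+1)/2}$ and $|y|^{(2n+1)/2}$ depend on disjoint pairs of real variables, their product lies in $C^{n,1/2}_{loc}(\mathbb{C}^2)$, and adding the smooth polynomial $Q_n$ preserves this regularity.

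For the second assertion I would argue by contradiction, using that $c_n\neq 0$. To verify $c_n\neq 0$: if $c_n=0$, then $P_n(z)=q_n(z^2)$ would be an even polynomial divisible by $(z-1)^{2n+2}$, hence by $(z+1)^{2n+2}$, hence by $(z^2-1)^{2n+2}$; but this polynomial has degree $4n+4>4n+2\geq\deg P_n$, forcing $P_n=0$ and contradicting the defining property of $n\in\mathcal{N}$. Now suppose for contradiction that $F_n\in C^\infty(\mathbb{C}^2)$; then $G_n=c_n^{-1}(F_n-Q_n)\in C^\infty(\mathbb{C}^2)$. Restricting to the real line $\{y=1,\ \operatorname{Im}(x)=0\}$ yields the one-variable function $u\mapsto|u|^{(2n+1)/2}$, whose $(n+1)$-st derivative behaves like $|u|^{-1/2}$ near $u=0$ and is therefore unbounded, contradicting the smoothness of $G_n$.

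The main obstacle is the first step: checking uniformly that every mixed partial derivative of $G_n$ of order $\leq n$ extends continuously across $\{x=0\}\cup\{y=0\}$ and is $\tfrac12$-H\"older on compact neighborhoods of these loci. Because the two factors of $G_n$ involve disjoint variable groups, the only real subtlety is combining the one-variable estimates via the Leibniz rule; this reduces cleanly to the radial computation for $\varphi$, which is routine but tedious to spell out in full generality.
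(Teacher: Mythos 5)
Your proof is correct, and it diverges from the paper's argument in the reduction step. The paper writes $|xy|^{(2n+1)/2}$ as the composition $\varphi\circ\psi$ with $\varphi(z)=|z|^{(2n+1)/2}\in C^{n,1/2}_{loc}(\mathbb{C})$ and $\psi(x,y)=xy$ smooth, and invokes the stability of H\"older classes under composition with smooth maps. You instead factor $|xy|^{(2n+1)/2}=|x|^{(2n+1)/2}\cdot|y|^{(2n+1)/2}$ into functions of disjoint real variable groups and invoke the Leibniz rule together with the fact that a product of bounded $\alpha$-H\"older functions in separated variables is again $\alpha$-H\"older. Both reductions land on the same one-variable fact about $\varphi$; your product route is arguably a bit more elementary since it avoids the chain-rule bookkeeping for H\"older norms, whereas the paper's composition route is shorter to state. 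One genuine improvement in your write-up is the explicit verification that $c_n\neq 0$ for $n\in\mathcal N$ (via the parity argument: if $c_n=0$ then $P_n$ is even, so $(z^2-1)^{2n+2}\mid P_n$, forcing $P_n=0$ by degree count). The paper silently uses this when passing from ``$|xy|^{(2n+1)/2}\notin C^\infty$'' to ``$F_n\notin C^\infty$,'' so your added lemma closes a small gap. One minor point to tighten: asserting that the $n$-th derivatives of $\varphi$ are ``bounded by $Cr^{1/2}$'' does not by itself give $\tfrac12$-H\"older continuity; what one actually wants is that these derivatives are homogeneous of degree $\tfrac12$ and smooth away from the origin, from which $\tfrac12$-H\"older regularity follows by a standard scaling argument. (The paper also leaves this one-variable fact as ``easily verified.'')
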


\begin{proof}
We notice that $Q_n(x,y)$ is a homogeneous polynomial of $x,y$, so
$Q_n\in C^\infty (\mathbb{C}^2)$. Thus to prove the proposition it
is sufficient to prove that $|xy|^\frac{2n+1}{2}\in
C_{loc}^{n,\frac{1}{2}}(\mathbb{C}^2)$.

It can be easily verified that $|z|^{n+\alpha}\in
C_{loc}^{n,\alpha}(\mathbb{C})$ for every $n\in\mathbb{N}$ and
$0<\alpha<1$, so $|z|^{\frac{2n+1}{2}}\in
C_{loc}^{n,\frac{1}{2}}(\mathbb{C})$. By composing
$|z|^{\frac{2n+1}{2}}$ with the $C^\infty$ function $z=xy$, we have
$|xy|^\frac{2n+1}{2}\in C_{loc}^{n,\frac{1}{2}}(\mathbb{C}^2)$. It
can also be easily verified that $|xy|^{n+\alpha}\notin C^\infty(\mathbb{C}^2)$, thus
we prove the proposition.
\end{proof}

Since $\mathcal{N}$ is an infinite set, for any $N\in\mathbb{N}$ there exists $n\in \mathcal{N}$ and $n>N$ such that $F_n\in C_{loc}^N(\mathbb{C}^2)$. Thus the families in Example \ref{counterex n} are $C_{loc}^{n+\frac{1}{2}}$ for arbitrary large $n$ but not $C^\infty$.

School of Mathematical Sciences, Peking University,

Beijing, 100871, P. R. China.

Email: jerry.liu1022@gmail.com
\end{document}